\newtheorem{theorem}{Theorem}
\newtheorem{lemma}{Lemma}
\newtheorem{remark}{Remark}
\begin{document}
\title{The kernel of the  generalized Clifford-Fourier transform and its generating function}

\author{Pan Lian$^{1,2}$\footnote{E-mail: {\tt pan.lian@ugent.be (corresponding author)}} \and Gejun Bao$^{1}$ \footnote{E-mail: {\tt baogj@hit.edu.cn}} \and Hendrik De Bie$^{2}$\footnote{E-mail: {\tt hendrik.debie@ugent.be}}\and Denis Constales$^{2}$ \footnote{E-mail: {\tt  denis.constales@ugent.be}}  }

\vspace{10mm}
\date{\small{1: Department of Mathematics - Harbin Institute of Technology\\
West Da-Zhi Street 92, 150001 Harbin, P.R.China}\\ \vspace{5mm}
 \small{2:  Department of Mathematical Analysis\\ Faculty of Engineering and Architecture -- Ghent University}\\
\small{Galglaan 2, 9000 Gent,
Belgium}\\\vspace{5mm}}



\maketitle

\begin{abstract}

 In this paper, we study the generalized Clifford-Fourier transform introduced in \cite{Ro1} using the Laplace transform technique. We give explicit expressions in the even dimensional case, we obtain polynomial bounds for the kernel functions and establish a generating function.

~\\

{\em Keywords}: Clifford-Fourier transform, Laplace transform, Bessel function, generalized Fourier transform
~\\

{\em Mathematics Subject Classification:} 42B10, 30G35, 15A66, 44A10
\end{abstract}

\tableofcontents

\section{Introduction}

In recent years, quite some attention has been devoted to the study of the so-called Clifford-Fourier transform. This transform, first established in \cite{FNF, FNF2} is a genuinely non-scalar generalization of the Fourier transform, developed within the framework of Clifford analysis \cite{ca}. Indeed, it can be written as

\[
F_{-}(f)(y)=(2\pi)^{-m/2}\int_{\mathbb{R}^{m}} K_m(x,y)f(x)dx
\]
with
\[
K_{m}(x,y)=e^{ i\frac{\pi}{2}\Gamma_{y}}e^{-i(x,y)}
\]
with $\Gamma_y$ the spherical Dirac operator (see equation (\ref{ga})).

It turned out to be a difficult problem to determine the kernel $K_{m}(x,y)$ explicitly. This was first achieved in \cite{HY} using plane wave decompositions. Later, in \cite{MH} a different method using wave equations was established. In \cite{CDL}, a short proof was obtained by considering the Clifford-Fourier kernel in the Laplace domain, where it takes on a much simpler form.

 Our aim in the present paper is to develop the Laplace transform method for a much wider class of generalized Fourier transforms. According to investigations in \cite{Ro1} using the representation theory for the Lie superalgebra $\mathfrak{osp}(1|2)$, the following expression
\begin{equation}
\label{Kexpr}
e^{i \frac{\pi}{2} G(\Gamma_y) }e^{-i(x,y)}
\end{equation}
where $G$ is an integer-valued polynomial can be used as the kernel for a generalized Fourier transform that still satisfies properties very close to that of the classical transform.
The extension of the Laplace transform technique to kernels of type (\ref{Kexpr}) will allow us to find explicit expressions for the kernel. We will moreover determine which polynomials $G$ give rise to polynomially bounded kernels and we will determine the generating function corresponding to a fixed polynomial $G$.

The paper is organized as follows. In order to make the exposition self-contained, in Section 2,  we recall basic facts of the Laplace transform, Clifford analysis and the generalized Clifford-Fourier transform. Section 3 is devoted to establishing the connection between the kernel of the fractional Clifford-Fourier transform \cite{CDL} and the generalized Clifford-Fourier transform. We first compute a special case in Section 3.1. Then the method is generalized to the case in which the polynomial has integer coefficients in Section 3.2. The kernel and the generating function in  the even dimensional case are given. We also discuss  which kernels are polynomially bounded.

\section{Preliminaries}
\subsection{The Laplace transform}
 The Laplace transform of a real or complex valued function $f$ which has exponential order $\alpha$, i.e. $|f(t)|\le Ce^{\alpha t}, t\ge t_{0}$ is defined as
\[F(s)=\mathcal{L}(f(t))=\int_{0}^{\infty}e^{-st}f(t)dt.\]  By Lerch's theorem \cite{Lp1}, the inverse transform \[\mathcal{L}^{-1}(F(s))=f(t)\] is uniquely defined when we restrict to functions which are continuous on $[0,\infty)$.  Usually, we can use integral transform tables (see e.g. \cite{lp2}) and the partial fraction expansion to compute the Laplace transform and its inverse. We list some  which will be used in this paper:
\begin{eqnarray}
\label{l7}\mathcal{L}(e^{-\alpha t})&=&\frac{1}{s+\alpha};\\
\label{l8} \mathcal{L}(t^{k-1}e^{-\alpha t})&=&\frac{\Gamma(k)}{(s+\alpha)^{k}},   \quad k>0.
\end{eqnarray}
We also need the convolution formula and the inverse Laplace transform. Denote by $r=(s^{2}+a^{2})^{1/2}$, $R=s+r$, $G(s)=\mathcal{L}(g(t))$ and $F(s)=\mathcal{L}(f(t))$. We have
\begin{eqnarray}\label{cv1} G(s)F(s)&=&\mathcal{L}(\int_{0}^{t}g(t-\tau)f(\tau)d\tau);\\
\label{l2}\mathcal{L}^{-1}(a^{\nu}r^{-2\nu-1})&=&2^{\nu}\pi^{-1/2}\Gamma(\nu+\frac{1}{2})t^{\nu}J_{\nu}(at),  \qquad   \mbox{Re}(\nu) >-1/2,  \mbox{Re} (s)>|\mbox{Im} (a)|. \end{eqnarray}


\subsection{Clifford analysis and generalized Fourier transforms }
In this section, we give a quick review of the basic concepts in Clifford analysis and generalized Fourier transforms.
Denoting by $\{e_{1}, e_{2}, \ldots, e_{m}\}$ the orthonormal basis of $\mathbb{R}^{m}$, the Clifford algebra $\mathcal{C}\ell_{0,m}$ over $\mathbb{R}^{m}$ is spanned by the reduced products \[\mathop{\cup}_{j=1}^{m}\{e_{\alpha}=e_{i_{1}}e_{i_{2}}\ldots e_{i_{j}}:\alpha=\{i_{1},i_{2},\ldots, i_{j}\}, \quad   1\le i_{1}<i_{2}<\cdots<i_{j}\le m\}\]  with the relations $e_{i}e_{j}+e_{j}e_{i}=-2\delta_{ij}$. We identify the point $x=(x_{1},\ldots, x_{m})$ in $\mathbb{R}^{m}$ with the vector variable $x=\sum_{j=1}^{m}e_{j}x_{j}$.  The inner product and the wedge product of two vectors $x,y\in \mathbb{R}^{m}$ can be defined by  the Clifford product: \[(x,y):=\sum_{j=1}^{m}x_{j}y_{j}=-\frac{1}{2}(xy+yx);\]
\[x\wedge y:=\sum_{j<k}e_{j}e_{k}(x_{j}y_{k}-x_{k}y_{j})=\frac{1}{2}(xy-yx).\]
We can find the Clifford product $xy=-(x,y)+x\wedge y$, and $(x\wedge y)^{2}=-|x|^{2}|y|^{2}+(x,y)^{2}$ (see \cite{HY}).
The complexified Clifford algebra $\mathcal{C}\ell_{0,m}^{c}$ is defined as $\mathbb{C}\otimes \mathcal{C}\ell_{0,m}$.

The conjugation is defined by $\overline{(e_{j_{1}}\ldots e_{j_{l}})}= (-1)^{l}e_{j_{l}}\ldots e_{j_{1}}$ as a linear mapping. For $x,y\in \mathcal{C}\ell_{0,m}^{c}$, we have $\overline{(xy)}=\overline{y}\overline{x}, \overline{\overline{x}}=x,$ and $\overline{i}=i$ which is not the usual complex conjugation. We define the Clifford norm of $x$ by $|x|^{2}=x\bar{x}, x\in \mathcal{C}\ell_{0,m}^{c}.$

The Dirac operator is given by $D=\sum_{j=1}^{m}e_{j}\partial_{x_{j}}.$ Together with the vector variable $x$, they satisfy the relations \[D^{2}=-\Delta, \qquad x^{2}=-|x|^{2}, \qquad \{x, D\}=-2\mathbb{E}-m ,\] where $\{a, b\}=ab+ba$ and $\mathbb{E}=\sum_{j=1}^{m}x_{j}\partial_{x_{j}}$ is the Euler operator and hence they generate a realization of the Lie superalgebra $\mathfrak{ osp}(1|2)$, which contains the Lie algebra $\mathfrak{sl}_{2}=\mbox{span}\{\Delta, |x|^{2}, [\Delta, |x|^{2}]\}$ as its even part.
 A function $u(x)$ is  called monogenic if $Du=0$. An important example of monogenic functions is the generalized Cauchy kernel
\[G(x)=\frac{1}{\omega_{m}}\frac{\bar{x}}{|x|^{m}}\] where $\omega_{m}$ is the surface area of the unit ball in $\mathbb{R}^{m}$. It is the fundamental solution of Dirac operator\cite{ca}. Note that the norm here is $|x|=(\sum_{i=1}^{m}x_{i}^{2})^{1/2}$ and coincides with Clifford norm.

Denote by $\mathcal{P}$ the space of polynomials taking values in $\mathcal{C}\ell_{0,m}$, i.e. $\mathcal{P}:=\mathbb{R}[x_{1},\ldots,x_{m}]\otimes\mathcal{C}\ell_{0,m}$. The space of homogeneous polynomials of degree $k$ is then denoted by $\mathcal{P}_{k}$. The space $\mathcal{M}_{k}:=(\mbox{ker}D)\cap \mathcal{P}_{k},$ is called the space of spherical monogenics of degree $k$.

The local behaviour of a monogenic function near a point can be investigated by the polynomials introduced above. The following theorem  is  the analogue of the Taylor series in complex analysis.
\begin{theorem} \label{ts}\cite{ca} Suppose $f$ is monogenic in an open set $\Omega$ containing the origin.  Then there exists an open neighbourhood $\Lambda$ of the origin in which $f$ can be developed into a normally convergent series of spherical monogenics $ M_{k}f(x)$, i.e.
\[f(x)=\sum_{k=0}^{\infty}M_{k}f(x),\]
with $M_{k}f(x)\in \mathcal{M}_{k}$.
\end{theorem}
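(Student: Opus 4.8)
The plan is to deduce this from the ordinary real-analytic Taylor expansion applied componentwise, and then to extract monogenicity of each homogeneous piece directly from the equation $Df=0$.

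First I would use that monogenicity forces real-analyticity. Writing $f=\sum_{\alpha}e_{\alpha}f_{\alpha}$ with scalar components $f_{\alpha}$, the identity $D^{2}=-\Delta$ gives $\Delta f = -D^{2}f = 0$, so every $f_{\alpha}$ is harmonic in $\Omega$ and hence real-analytic there. Consequently, on some open ball $\Lambda=B(0,\rho)\subset\Omega$ each component, and therefore $f$ itself, equals the sum of its multivariate Taylor series at the origin, with normal convergence on every compact subset of $\Lambda$. Collecting the monomials of equal total degree yields $f(x)=\sum_{k=0}^{\infty}f_{k}(x)$ with $f_{k}\in\mathcal{P}_{k}$ and the convergence still normal on compacts of $\Lambda$.

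Next I would show each $f_{k}$ is monogenic. Since the series converges normally on compact subsets of $\Lambda$ it may be differentiated term by term there, so $0=Df(x)=\sum_{k\ge 1}Df_{k}(x)$ on $\Lambda$. But $Df_{k}\in\mathcal{P}_{k-1}$ is homogeneous of degree $k-1$, and the decomposition of a function analytic near the origin into homogeneous components of pairwise distinct degrees is unique (e.g.\ substitute $x\mapsto tx$ and compare powers of $t$, or match Taylor coefficients). Hence every summand vanishes, $Df_{k}\equiv 0$, i.e.\ $f_{k}\in(\ker D)\cap\mathcal{P}_{k}=\mathcal{M}_{k}$. Setting $M_{k}f:=f_{k}$ then gives the stated expansion.

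A more constructive route, which additionally produces an explicit integral formula for $M_{k}f$, starts from the Cauchy integral representation $f(x)=\frac{1}{\omega_{m}}\int_{\partial B}G(y-x)\,n(y)\,f(y)\,d\sigma(y)$, valid for $x$ in a ball $B$ with $\overline{B}\subset\Omega$ and $n(y)$ the outward unit normal, and expands the Cauchy kernel $G(y-x)=\frac{1}{\omega_{m}}\frac{\overline{y-x}}{|y-x|^{m}}$, for $|x|<|y|$, into its series of inner spherical monogenics in the variable $x$ — the Clifford counterpart of $\tfrac{1}{y-x}=\sum_{k}x^{k}y^{-k-1}$. This series converges normally for $|x|\le r<R\le\inf_{y\in\partial B}|y|$; substituting it into the Cauchy formula and interchanging sum and integral (legitimate by normal convergence) gives $f=\sum_{k}M_{k}f$ with $M_{k}f$ homogeneous of degree $k$ in $x$, and monogenic because each term of the kernel expansion is left monogenic in $x$. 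In either approach the only genuine technical point is the justification of the term-by-term operation — differentiation under the sum in the first route, or the interchange of summation and integration together with the estimate giving normal convergence of the kernel expansion in the second; uniqueness of the homogeneous decomposition, which delivers $Df_{k}=0$, is then immediate.
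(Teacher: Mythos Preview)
Your argument is correct. Note, however, that the paper does not give its own proof of this statement: it is quoted from the reference \cite{ca} as background in the preliminaries, so there is no in-paper proof to compare against. Your second route---the Cauchy integral representation together with the series expansion of the Cauchy kernel $G(y-x)$ into inner spherical monogenics---is precisely the argument used in \cite{ca}. Your first route via harmonicity and the scalar Taylor expansion is a legitimate and somewhat more elementary alternative: it avoids the machinery of the Clifford--Cauchy kernel expansion, at the cost of not yielding the explicit integral formula for $M_{k}f$. Both approaches hinge on the same two facts you correctly isolate, namely normal convergence (to justify termwise differentiation or the interchange of sum and integral) and uniqueness of the homogeneous decomposition.
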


 The classical Fourier transform
\[\mathcal{F}(f)(y)=(2\pi)^{-m/2}\int_{\mathbb{R}^{m}}e^{-i(x,y)}f(x)dx,\] with $(x,y)$ the usual inner product can be represented by  the operator exponential \cite{HR}, \cite{Rad1}  \[\mathcal{F}=e^{-i\frac{\pi}{4}(\Delta-|x|^{2}-m)}
.\]
The Clifford-Hermite functions  \[\psi_{2p,k,l}(x):=2^{p}p!L_{p}^{\frac{m}{2}+k-1}(|x|^{2})M_{k}^{l}e^{-|x|^{2}/2},\]
\[\psi_{2p+1,k,l}(x):=2^{p}p!L_{p}^{\frac{m}{2}+k}(|x|^{2})xM_{k}^{l}e^{-|x|^{2}/2},\]
where $p,k \in \mathbb{Z}_{\ge 0}$ and $\{M_{k}^{l}|l=1,\ldots, \dim(\mathcal{M}_{k})\}$ form a basis for $\mathcal{M}_{k}$, the space of spherical monogenics of degree $k$. They moreover realize the complete decomposition of the rapidly decreasing functions $\mathcal{S}(\mathbb{R}^{m})\otimes \mathcal{C}\ell_{m}\subset L^{2}(\mathbb{R}^{m})\otimes \mathcal{C}\ell_{m} $ in irreducible subspaces under the action of the dual pair $(Spin(m), \mathfrak{osp}(1|2))$.
The action of the regular Fourier transform on this basis is given by \begin{eqnarray}\label{s1}\mathcal{F}\psi_{j,k,l}=e^{-i\frac{\pi}{2}(j+k)}\psi_{j,k,l}=(-i)^{j+k}\psi_{j,k,l}.\end{eqnarray}

We further introduce the Gamma operator or the angular Dirac operator (see \cite{ca})
\begin{eqnarray}\label{ga}\Gamma_{x}:=-\sum_{j<k}e_{j}e_{k}(x_{j}\partial_{x_{k}}-x_{k}\partial_{x_{j}})=-x D_{x}-\mathbb{E}_{x},\end{eqnarray}
here $\mathbb{E}_{x}=\sum_{i=1}^{m}x_{i}\partial_{x_{i}}$ is the Euler operator. Note that $\Gamma_{x} $ commutes with scalar radial functions. The operator $\Gamma_{x}$ has two important eigenspaces:
\begin{eqnarray}\label{eg}\Gamma_{x}\mathcal{M}_{k}=-k\mathcal{M}_{k},\end{eqnarray}\begin{eqnarray}\label{eg1}\Gamma_{x}(x\mathcal{M}_{k-1})=(k+m-2)x\mathcal{M}_{k-1}\end{eqnarray}
which follows from the definition of $\Gamma_{x}$. The Scasimir $S$ in our operator realization of $\mathfrak{osp}(1|2)$ is related to the angular Dirac operator by $S=-\Gamma_{x}+\frac{m-1}{2}$, see \cite{lie}. The Casimir element $C=S^{2}$ acts on the Clifford-Hermite function by \[C\psi_{j,k,l}=(k+\frac{m-1}{2})^{2}\psi_{j,k,l}.\]
In \cite{Ro1}, the authors studied the full class of integral transforms which satisfy the following condition.
\begin{theorem} The properties

 (1) the Clifford-Helmholtz relations \[T\circ D_{x}=-iy\circ T,\]
\[T\circ x=-iD_{y}\circ T,\]

(2) $T\psi_{j,k,l}=\mu_{j,k}\psi_{j,k,l}$ with $\mu_{j,k}\in \mathbb{C},$

(3) $T^{4}=id$
are satisfied by the operators $T$ of the form \[T=e^{i\frac{\pi}{2}F(C)}e^{i\frac{\pi}{4}(\Delta-|x|^{2}-m)}\in e^{i\frac{\pi}{2}\bar{\mathcal{U}}(osp(1|2))}\]
where $F(C) $ is an operator that takes integer values when evaluated in the eigenvalues of $C$.
\end{theorem}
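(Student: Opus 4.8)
The plan is to split $T$ as $T=e^{i\frac{\pi}{2}F(C)}\,\mathcal{F}_{0}$ with $\mathcal{F}_{0}:=e^{i\frac{\pi}{4}(\Delta-|x|^{2}-m)}$, to exploit that $\mathcal{F}_{0}$ is essentially the classical Fourier transform and therefore already carries almost all of the needed structure, and to handle the extra factor $e^{i\frac{\pi}{2}F(C)}$ via the centrality of $C=S^{2}$ in $\mathfrak{osp}(1|2)$ and the hypothesis that $F$ is integer-valued on the spectrum of $C$. First I would record that $\Delta-|x|^{2}-m$ differs from the $\mathfrak{sl}_{2}$-element $\Delta-|x|^{2}$ only by the scalar $-m$, so that $\mathcal{F}_{0}=\mathcal{F}^{-1}=\mathcal{F}^{3}$, where $\mathcal{F}=e^{-i\frac{\pi}{4}(\Delta-|x|^{2}-m)}$ is the classical Fourier transform. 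Hence $\mathcal{F}_{0}^{4}=\mathrm{id}$; by (\ref{s1}), $\mathcal{F}_{0}\psi_{j,k,l}=i^{\,j+k}\psi_{j,k,l}$; and the elementary identities $\mathcal{F}D_{x}=iy\,\mathcal{F}$, $\mathcal{F}x=iD_{y}\,\mathcal{F}$ (integration by parts in the defining integral) invert to $\mathcal{F}_{0}D_{x}=-iy\,\mathcal{F}_{0}$ and $\mathcal{F}_{0}x=-iD_{y}\,\mathcal{F}_{0}$. So $\mathcal{F}_{0}$ alone satisfies (1) and (3) and satisfies (2) with eigenvalues $i^{\,j+k}$. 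Finally, $T\in e^{i\frac{\pi}{2}\bar{\mathcal{U}}(\mathfrak{osp}(1|2))}$ is automatic, because $C$ commutes with $\Delta-|x|^{2}-m$, so the two exponents add, each being $\tfrac{i\pi}{2}$ times an element of $\bar{\mathcal{U}}(\mathfrak{osp}(1|2))$.

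The next step is to bring in $e^{i\frac{\pi}{2}F(C)}$, defined by spectral calculus on the decomposition into $k$-blocks: on $\psi_{j,k,l}$ it acts by the scalar $i^{\,F((k+\frac{m-1}{2})^{2})}$, which is a fourth root of unity exactly because $F$ takes integer values on the spectrum $\{(k+\tfrac{m-1}{2})^{2}:k\in\mathbb{Z}_{\ge 0}\}$ of $C$. Property (2) is then immediate: $T\psi_{j,k,l}=\mu_{j,k}\psi_{j,k,l}$ with $\mu_{j,k}=i^{\,F((k+\frac{m-1}{2})^{2})+j+k}\in\mathbb{C}$, independent of $l$, and since $\{\psi_{j,k,l}\}$ is a complete system this pins down $T$ itself. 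For (3) I would use that $C$ is central, hence commutes with $\Delta-|x|^{2}-m$, so that $e^{i\frac{\pi}{2}F(C)}$ commutes with $\mathcal{F}_{0}$ and
\[
T^{4}=e^{2\pi iF(C)}\,\mathcal{F}_{0}^{4}=e^{2\pi iF(C)},
\]
which acts on every $\psi_{j,k,l}$ by $e^{2\pi i\,\text{(integer)}}=1$; thus $T^{4}=\mathrm{id}$, equivalently $\mu_{j,k}^{4}=1$.

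For (1) the only additional ingredient is, once more, centrality: since the Scasimir satisfies $Sx=-xS$ and $SD_{x}=-D_{x}S$, the Casimir $C=S^{2}$ commutes with $D_{x}$ and with multiplication by the vector variable, hence so do $F(C)$ and $e^{i\frac{\pi}{2}F(C)}$. Composing with the relations already recorded for $\mathcal{F}_{0}$,
\[
T\circ D_{x}=e^{i\frac{\pi}{2}F(C)}\mathcal{F}_{0}D_{x}=e^{i\frac{\pi}{2}F(C)}(-iy)\,\mathcal{F}_{0}=(-iy)\,e^{i\frac{\pi}{2}F(C)}\mathcal{F}_{0}=-iy\circ T,
\]
and in exactly the same way $T\circ x=-iD_{y}\circ T$, which are the Clifford--Helmholtz relations in (1).

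Everything here is essentially formal, so I do not expect a real obstacle; the only points needing care are the bookkeeping of the two ($x$- and $y$-)copies of the $\mathfrak{osp}(1|2)$-realization that $\mathcal{F}_{0}$ intertwines, and the rigorous justification of the commutations of exponentials of the unbounded operators $\Delta-|x|^{2}-m$ and $C$. Both are handled cleanly by reducing everything to the complete orthogonal basis $\{\psi_{j,k,l}\}$, on which $\mathcal{F}_{0}$ and $F(C)$ are diagonal and $D_{x},x,D_{y},y$ act as the $\mathfrak{osp}(1|2)$-ladder operators with explicit coefficients, so that each of (1)--(3) collapses to a finite scalar identity. I should add that this is only the ``if'' direction of the characterization in \cite{Ro1} --- that operators of this shape do enjoy (1)--(3) --- and not the harder converse that they are the only such operators, which is not what is claimed here.
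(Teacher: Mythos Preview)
The paper does not actually prove this theorem: it is quoted verbatim as a result from \cite{Ro1}, with no argument supplied. So there is no ``paper's own proof'' to compare your proposal against.

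That said, your argument is correct and is essentially the natural one. The decomposition $T=e^{i\frac{\pi}{2}F(C)}\mathcal{F}_{0}$, the identification $\mathcal{F}_{0}=\mathcal{F}^{-1}$ via the operator exponential (so that the Clifford--Helmholtz relations and $\mathcal{F}_{0}^{4}=\mathrm{id}$ are inherited from the classical transform), the use of the anticommutation $Sx=-xS$, $SD=-DS$ to conclude that $C=S^{2}$ is genuinely central and hence that $e^{i\frac{\pi}{2}F(C)}$ commutes with $x$, $D$, and $\mathcal{F}_{0}$, and finally the diagonalisation on the Clifford--Hermite basis $\{\psi_{j,k,l}\}$ to extract the eigenvalues $\mu_{j,k}=i^{\,F((k+\frac{m-1}{2})^{2})+j+k}$ and to verify $T^{4}=\mathrm{id}$ --- all of this is sound and is exactly the line of reasoning behind the result in \cite{Ro1}. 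Your caveat about the $x$/$y$ bookkeeping is well placed: strictly speaking the factor $e^{i\frac{\pi}{2}F(C)}$ sits on the $y$-side after $\mathcal{F}_{0}$, so in the step $e^{i\frac{\pi}{2}F(C)}(-iy)=(-iy)e^{i\frac{\pi}{2}F(C)}$ one is using the centrality of $C_{y}$ in the $y$-realisation, which follows from the same Scasimir argument. Your closing remark that this establishes only the sufficiency direction (operators of this form satisfy (1)--(3)), not the full characterisation proved in \cite{Ro1}, is also accurate and matches how the theorem is phrased here.
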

The integral kernel of the generalized Fourier transform $T$ can be expressed as $e^{i\frac{\pi}{2}F(C)}e^{-i(x,y)}$. We are in particular interested in the case where $F(C)$ reduces to a polynomial $G(\Gamma_{y})$ with integer coefficients.

\section{Generalized kernel in the Laplace domain}
\subsection{Closed expression for $e^{i\frac{\pi}{2}\Gamma_{y}^{2}}e^{-i(x,y)}$ }

In this subsection, we use the Laplace transform method to compute $e^{i\frac{\pi}{2}\Gamma_{y}^{2}}e^{-i(x,y)}$. The trick here will be used to compute the more general case in next subsection.
 We use the notation $\sqrt{+}:=\sqrt{s^{2}+|x|^{2}|y|^{2}}$. The following lemma was obtained in \cite{CDL}.
\begin{lemma} The Laplace transform of $t^{m/2-1}e^{-it(x,y)}$ can be expressed as
\begin{eqnarray}\label{le1} &&\mathcal{L}(t^{m/2-1}e^{-it(x,y)})=\frac{2^{m/2-1}\Gamma(m/2)}{\sqrt{+}(s+\sqrt{+})^{m/2-1}}
\frac{\displaystyle 1-\frac{iyx}{s+\sqrt{+}}+\frac{ iy(1-\frac{ \displaystyle iyx}{\displaystyle s+\sqrt{+}})x}{s+\sqrt{+}}}{\displaystyle \left|1-\frac{iyx}{s+\sqrt{+}}\right|^{m}}.\end{eqnarray}
\end{lemma}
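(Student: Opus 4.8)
The goal is to compute the Laplace transform (in a parameter $t$) of $t^{m/2-1}e^{-it(x,y)}$, where $(x,y)$ is the Clifford inner product and the answer lives in the Clifford algebra. The key observation is that although $(x,y)$ is a scalar, it is convenient to write it via the Clifford product $xy = -(x,y) + x\wedge y$ and then treat the exponential of a Clifford element formally. Let me think about how this goes.

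Actually, let's reconsider. We have $e^{-it(x,y)}$ where $(x,y)$ is a scalar. So $\mathcal{L}(t^{m/2-1}e^{-it(x,y)})$ is, by formula \eqref{l8}, just $\frac{\Gamma(m/2)}{(s+i(x,y))^{m/2}}$. The point of the lemma is to rewrite this scalar expression in a form that involves $yx$ (a Clifford element) and is manifestly the "right" form for later manipulation — in particular to express it in terms of $\sqrt{+} = \sqrt{s^2 + |x|^2|y|^2}$.

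So the approach: start from $\frac{\Gamma(m/2)}{(s+i(x,y))^{m/2}}$. Use $(x,y) = -\frac{1}{2}(xy+yx)$ and $x\wedge y = \frac12(xy-yx)$, together with $(x\wedge y)^2 = (x,y)^2 - |x|^2|y|^2$. The plan is to find a Clifford element whose "norm squared" produces $s^2 + |x|^2|y|^2$ under the relevant conjugation, so that rationalizing the denominator $(s+i(x,y))$ introduces the factor $\sqrt{+}$. Specifically, consider the element $s - iyx$ (or $s+iyx$); compute $(s-iyx)\overline{(s-iyx)}$ or $(s-iyx)(s+ixy)$ and show it equals something like $s^2 + |x|^2|y|^2 \pm 2is(x\wedge y)$ — and then a further multiplication kills the $x\wedge y$ term, leaving $(\sqrt{+})^2$. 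This is the standard "double rationalization" trick: multiply numerator and denominator by conjugate-type factors twice to turn $(s+i(x,y))^{m/2}$ into a power of $\sqrt{+}$ times a Clifford-valued numerator.

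Let me be more concrete about the identity to target. Write $w = 1 - \frac{iyx}{s+\sqrt{+}}$. The claim is $\frac{\Gamma(m/2)}{(s+i(x,y))^{m/2}} = \frac{2^{m/2-1}\Gamma(m/2)}{\sqrt{+}\,(s+\sqrt{+})^{m/2-1}}\cdot\frac{w + iyw x/(s+\sqrt{+})}{|w|^m}$. So the heart of the proof is the algebraic identity, for the Clifford element $w$:
\[
\frac{1}{s+i(x,y)} = \frac{2}{s+\sqrt{+}}\cdot\frac{\;\text{(a linear combination of }1\text{ and }yx)\;}{|w|^2}
\]
raised appropriately, plus tracking how the $(m/2)$-th power distributes. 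The hard part will be handling the non-commutativity: $w$ and $x$ do not commute, so "raising to the $m/2$ power" and the appearance of the sandwiched term $iy w x/(s+\sqrt+)$ must be justified carefully. I expect the cleanest route is: (i) establish the $m=2$ (or the "first power") identity $\frac{1}{s+i(x,y)} = \frac{1}{\sqrt{+}}\cdot\frac{\text{something}}{|w|^2}$ by direct Clifford computation using $(x\wedge y)^2 = (x,y)^2-|x|^2|y|^2$ and the factorization $s^2 + |x|^2|y|^2 = (s+i(x,y)+i(x\wedge y))(s+i(x,y)-i(x\wedge y))$ — note $yx = -(x,y) - x\wedge y$ so $s+iyx$-type factors are exactly these; then (ii) bootstrap to general $m/2$ by noting that the scalar part $|w|^2 = w\bar w$ is central, so powers behave well, and carefully expanding $(s+i(x,y))^{-m/2} = (s+i(x,y))^{-1}\cdot(s+i(x,y))^{-(m/2-1)}$ where the second factor is scalar. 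The main obstacle is bookkeeping the non-commutative numerator — ensuring the three-term numerator $w + iywx/(s+\sqrt+)$ (equivalently $1 - iyx/(s+\sqrt+) + iy(1-iyx/(s+\sqrt+))x/(s+\sqrt+)$) is exactly what emerges, rather than guessing. Since this lemma is quoted from \cite{CDL}, I would present the verification compactly: exhibit the factorization of $s^2+|x|^2|y|^2$ into two conjugate Clifford linear factors, rationalize, and confirm the exponents and constants by checking the scalar (commutative) limit $x\wedge y \to 0$, where everything must reduce to $\Gamma(m/2)(s+i(x,y))^{-m/2}$.
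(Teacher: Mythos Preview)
The paper does not actually prove this lemma: it is stated with the remark ``The following lemma was obtained in \cite{CDL}'' and then used without further argument. So there is no proof in the paper to compare against.

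Your plan is nevertheless correct in outline. Since $(x,y)$ is scalar, formula~(\ref{l8}) gives immediately
\[
\mathcal{L}\bigl(t^{m/2-1}e^{-it(x,y)}\bigr)=\frac{\Gamma(m/2)}{(s+i(x,y))^{m/2}},
\]
and the lemma is then a purely algebraic identity in the Clifford algebra. Your ``double rationalization / bootstrap'' sketch would work but is more elaborate than necessary. A shorter verification, which you may prefer to write up, goes as follows. Set $R=s+\sqrt{+}$ and $w=1-\dfrac{iyx}{R}$. Using $xy+yx=-2(x,y)$ and $x^{2}=-|x|^{2}$ one computes
\[
|w|^{2}=w\,\bar w=1+\frac{2i(x,y)}{R}-\frac{|x|^{2}|y|^{2}}{R^{2}}=\frac{2(s+i(x,y))}{R},
\]
hence $|w|^{m}=2^{m/2}(s+i(x,y))^{m/2}R^{-m/2}$. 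For the numerator, $iy\,w\,x=iyx+\dfrac{|x|^{2}|y|^{2}}{R}$, so
\[
w+\frac{iy\,w\,x}{R}=1+\frac{|x|^{2}|y|^{2}}{R^{2}}=\frac{2\sqrt{+}}{R},
\]
a scalar. Substituting these two facts into the right-hand side of~(\ref{le1}) collapses it to $\Gamma(m/2)(s+i(x,y))^{-m/2}$, which is the left-hand side. No separate treatment of the $(m/2)$-th power or of the non-commutative sandwich is needed, because both $|w|^{2}$ and the full numerator turn out to be central (complex scalars).
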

In the following, we  will act with  $e^{i\frac{\pi}{2}\Gamma_{y}^{2}}$ on both sides of (\ref{le1}) to obtain the integral kernel in the Laplace domain.
Denote by \begin{eqnarray*} f(y)&=&\frac{2^{\frac{m}{2}}}{\sqrt{+}(s+\sqrt{+})^{m/2-1}} \frac{\displaystyle 1-\frac{iyx}{s+\sqrt{+}}}{\left|\displaystyle 1-\frac{iyx}{s+\sqrt{+}}\right|^{m}}\\&=&\frac{s+\sqrt{+}-iyx}{\sqrt{+}(s+i(x,y))^{m/2}},\end{eqnarray*}and \begin{eqnarray*}
g(y)&=&\frac{2^{\frac{m}{2}}}{\sqrt{+}(s+\sqrt{+})^{m/2-1}} \frac{\frac{ \displaystyle iy(1-\frac{ iyx}{\displaystyle s+\sqrt{+}})x}{\displaystyle s+\sqrt{+}}}{\displaystyle \left|1-\frac{iyx}{s+\sqrt{+}}\right|^{m}}\\&=&\frac{iy}{s+\sqrt{+}}f(y)x\\
&=&\frac{\sqrt{+}-s+iyx}{\sqrt{+}(s+i(x,y))^{m/2}}. \end{eqnarray*}
In \cite{CDL}, it has been proved that $f(y)$ has a series expansion as
\begin{eqnarray*}f(y)&=&\frac{2^{\frac{m}{2}}}{\sqrt{+}(s+\sqrt{+})^{m/2-1}}\sum_{k=0}^{\infty}\frac{M_{k}(y)}{(s+\sqrt{+})^{k}}
.\end{eqnarray*}
Here we rewrite
\[
f(y)=f_{0}(y)+f_{1}(y)+f_{2}(y)+f_{3}(y),
\]
with \begin{eqnarray}\label{fe1}f_{k}(y)=\frac{2^{\frac{m}{2}}}{\sqrt{+}(s+\sqrt{+})^{m/2-1}}\sum_{n=0}^{\infty}\frac{M_{4n+k}(y)}{(s+\sqrt{+})^{4n+k}}, \quad k=0, 1, 2,3.\end{eqnarray}
Each $f_{k}$ is  an eigenfunction of the operator $e^{i\frac{\pi}{2}\Gamma^{2}}.$   In fact, by (\ref{eg}), we have \begin{eqnarray*} e^{i\frac{\pi}{2}\Gamma_{y}^{2}} M_{k}(y)=e^{i\frac{\pi}{2}(-k)^{2}}M_{k}(y),
\end{eqnarray*} so \begin{eqnarray}\label{f1}
                           && e^{i\frac{\pi}{2}\Gamma_{y}^{2}} M_{4n}(y)=M_{4n}(y); \nonumber \\
                             &&e^{i\frac{\pi}{2}\Gamma_{y}^{2}} M_{4n+1}(y)= i M_{4n+1}(y); \nonumber\\
                             &&e^{i\frac{\pi}{2}\Gamma_{y}^{2}} M_{4n+2}(y)= M_{4n+2}(y); \nonumber\\
                             &&e^{i\frac{\pi}{2}\Gamma_{y}^{2}} M_{4n+3}(y)=i M_{4n+3}(y),
                         \end{eqnarray}
here $n=0,1,2,\cdots.$ Since the operator $\Gamma$ commutes with radial functions, we know that each $f_{k}$ is an eigenfunction of $e^{i\frac{\pi}{2}\Gamma^{2}}$ and the eigenvalues are given in (\ref{f1}).
In the following, we denote
\begin{eqnarray*}
f_{\alpha}(y)&=&\frac{2^{\frac{m}{2}}}{\sqrt{+}(s+\sqrt{+})^{m/2-1}}\sum_{k=0}^{\infty}\frac{M_{k}(iy)}{(s+\sqrt{+})^{k}}\\&=&\frac{s+\sqrt{+}+yx}{\sqrt{+}(\sqrt{+}-(x,y))^{m/2}},\end{eqnarray*}
\begin{eqnarray*}
f_{\beta}(y)&=&\frac{2^{\frac{m}{2}}}{\sqrt{+}(s+\sqrt{+})^{m/2-1}}\sum_{k=0}^{\infty}\frac{M_{k}(-y)}{(s+\sqrt{+})^{k}}\\&=&\frac{s+\sqrt{+}+iyx}{\sqrt{+}(s-i(x,y))^{m/2}},
\end{eqnarray*}
\begin{eqnarray*}
f_{\gamma}(y)&=&\frac{2^{\frac{m}{2}}}{\sqrt{+}(s+\sqrt{+})^{m/2-1}}\sum_{k=0}^{\infty}\frac{M_{k}(-iy)}{(s+\sqrt{+})^{k}}\\&=&\frac{s+\sqrt{+}-yx}{\sqrt{+}(\sqrt{+}+(x,y))^{m/2}}
\end{eqnarray*}
as well as
\begin{eqnarray*}&&g_{\alpha}(y)=\frac{iy}{s+\sqrt{+}}f_{\alpha}(y)x=\frac{i(\sqrt{+}-s)+iyx}{\sqrt{+}(\sqrt{+}-(x,y))^{m/2}},
\\&&g_{\beta}(y)=\frac{iy}{s+\sqrt{+}}f_{\beta}(y)x=\frac{s-\sqrt{+}+iyx}{\sqrt{+}(s-i(x,y))^{m/2}},
\\&& g_{\gamma}(y)=\frac{iy}{s+\sqrt{+}}f_{\gamma}(y)x=\frac{i(s-\sqrt{+})+iyx}{\sqrt{+}(\sqrt{+}+(x,y))^{m/2}}.
\end{eqnarray*}
\begin{remark}\label{rma1}
Comparing with Theorem 3 in \cite{CDL}, $\frac{\Gamma(m/2)}{2}(f_{\gamma}+g_{\alpha})$ is the Clifford-Fourier  kernel of  dimension $m=4n+1, n\in \mathbb{N}$ in the Laplace domain. Denote the first part of the fractional Clifford-Fourier kernel as
 \[
 F_{p}(x,y)=\frac{s+\sqrt{+}-ie^{-ip}yx}{\sqrt{+}(e^{-ip}(s\cos p+i\sqrt{+}\sin p+i(x,y)))^{m/2}}
\]
and the second part of the kernel as
\[
G_{p}(x,y)=-e^{ip}\frac{s-\sqrt{+}-ie^{-ip}yx}{\sqrt{+}(e^{ip}(s\cos p-i\sqrt{+}\sin p+i(x,y)))^{m/2}}.
\]
We find that
 $f(y)=F_{0}(x,y)$, $f_{\alpha}(y)=F_{-\frac{\pi}{2}}(x,y)$, $f_{\beta}(y)=F_{\pi}(x,y)$, $f_{\gamma}(y)=F_{\frac{\pi}{2}}(x,y)$,
 $g(y)=G_{0}(x,y)$, $g_{\alpha}(y)=G_{\frac{\pi}{2}}(x,y)$, $g_{\beta}(y)=G_{\pi}(x,y)$ and $g_{\gamma}(y)=G_{-\frac{\pi}{2}}(x,y)$.
 We could get the plane wave expansion and integral expression of $f, f_{\alpha,\beta,\gamma}$ and $g, g_{\alpha,\beta,\gamma}$ from \cite{CDL}.
\end{remark}
As $M_{k}$ is a polynomial of degree $k$, we have the following relations, \[ \left\{
              \begin{array}{ll}
                 f(y)=f_{0}(y)+f_{1}(y)+f_{2}(y)+f_{3}(y);\\
                 f_{\alpha}(y)=f_{0}(y)+if_{1}(y)-f_{2}(y)-if_{3}(y);\\
                 f_{\beta}(y)=f_{0}(y)-f_{1}(y)+f_{2}(y)-f_{3}(y);\\
                f_{\gamma}(y)=f_{0}(y)-if_{1}(y)-f_{2}(y)+if_{3}(y).
              \end{array}
            \right.\]  Each $f_{k}(y)$ can be obtained as following:
\begin{eqnarray}\label{ne1} \left\{
              \begin{array}{ll}
                 4f_{0}(y)=f(y)+f_{\alpha}(y)+f_{\beta}(y)+f_{\gamma}(y);\\
                 4f_{1}(y)=f(y)-if_{\alpha}(y)-f_{\beta}(y)+if_{\gamma}(y);\\
                 4f_{2}(y)=f(y)-f_{\alpha}(y)+f_{\beta}(y)-f_{\gamma}(y);\\
                4f_{3}(y)=f(y)+if_{\alpha}(y)-f_{\beta}(y)-if_{\gamma}(y).
              \end{array}
            \right.\end{eqnarray}
Now the action of  $e^{i\frac{\pi}{2}\Gamma_{y}^{2}}$ on $f(y)$ is known through its eigenfunctions,
\begin{eqnarray*}e^{i\frac{\pi}{2}\Gamma_{y}^{2}} f(y)&=&e^{i\frac{\pi}{2}\Gamma_{y}^{2}}\biggl(f_{0}(y)+f_{1}(y)+f_{2}(y)+f_{3}(y)\biggr)\\&=& f_{0}(y)+if_{1}(y)+f_{2}(y)+if_{3}(y)\\&=& \frac{1}{2}\biggl( f(y)+f_{\beta}(y)+if(y)-if_{\beta}(y)\biggr).
\end{eqnarray*}
The case $e^{i\frac{\pi}{2}\Gamma_{y}^{2}}g(y)$ can be treated similarly,
using (\ref{f1}) and \begin{eqnarray*}e^{i\frac{\pi}{2}\Gamma_{y}^{2}}(yM_{k}(y))&=&e^{i\frac{\pi}{2}(m-1+k)^{2}}(yM_{k}(y))\\&=&e^{i\frac{\pi}{2}(m-1)^{2}}e^{i\frac{\pi}{2}k^{2}}(yM_{k}(e^{i\pi(m-1)}y))\\&=&e^{i\frac{\pi}{2}(m-1)^{2}}y e^{i\frac{\pi}{2}k^{2}}(M_{k}(e^{i\pi(m-1)}y)).\end{eqnarray*}
 Collecting everything, we have
\begin{theorem}\label{the2} The kernel $t^{m/2-1}e^{i\frac{\pi}{2}\Gamma_{y}^{2}}e^{-i(x,y)}$ in the Laplace domain is \begin{eqnarray*}&&\mathcal{L}(t^{m/2-1}e^{i\frac{\pi}{2}\Gamma_{y}^{2}}e^{-it(x,y)})\\&=&\frac{\Gamma(m/2)}{4\sqrt{+}}\biggl((1+i)U^{1}_{m}+(1-i)U^{2}_{m}+e^{i\frac{\pi}{2}(m-1)^{2}}((1+i)U^{3}_{m}+(1-i)U^{4}_{m}) \biggr), \end{eqnarray*}
with \\
$U^{1}_{m}=\frac{\displaystyle s+\sqrt{+}-iyx}{\displaystyle (s+i(x,y))^{m/2}};$ \qquad
$U^{2}_{m}=\frac{\displaystyle s+\sqrt{+}+iyx}{\displaystyle (s-i(x,y))^{m/2}};\\$
$U^{3}_{m}=\frac{\displaystyle (-1)^{m-1}(\sqrt{+}-s)+iyx}{\displaystyle (s+(-1)^{m-1}i(x,y))^{m/2}};$ \qquad
$U^{4}_{m}=\frac{\displaystyle (-1)^{m-1}(s-\sqrt{+})+iyx}{\displaystyle (s-(-1)^{m-1}i(x,y))^{m/2}},\\$
where $\sqrt{+}=\sqrt{s^{2}+|x|^{2}|y|^{2}}$.
\end{theorem}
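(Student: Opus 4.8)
The plan is to apply the operator $e^{i\frac{\pi}{2}\Gamma_{y}^{2}}$ to both sides of (\ref{le1}). Since $\Gamma_{y}$ only differentiates in the variable $y$ whereas the Laplace transform only integrates in $t$, the two operations commute, so that
\[
\mathcal{L}\big(t^{m/2-1}e^{i\frac{\pi}{2}\Gamma_{y}^{2}}e^{-it(x,y)}\big)=e^{i\frac{\pi}{2}\Gamma_{y}^{2}}\,\mathcal{L}\big(t^{m/2-1}e^{-it(x,y)}\big)=\tfrac{\Gamma(m/2)}{2}\,e^{i\frac{\pi}{2}\Gamma_{y}^{2}}\big(f(y)+g(y)\big),
\]
in the notation of (\ref{le1}) and of $f,g$. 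It therefore remains to evaluate $e^{i\frac{\pi}{2}\Gamma_{y}^{2}}f(y)$ and $e^{i\frac{\pi}{2}\Gamma_{y}^{2}}g(y)$ separately and to add them; the termwise action of the operator on the monogenic series is legitimate by the normal convergence established in \cite{CDL}.

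For $f$, this is the computation carried out just above the statement: from the monogenic expansion of $f$, the eigenvalue relation (\ref{eg}), and the fact that the eigenvalue $e^{i\frac{\pi}{2}k^{2}}$ of $e^{i\frac{\pi}{2}\Gamma_{y}^{2}}$ on $\mathcal{M}_{k}$ equals $1$ for even $k$ and $i$ for odd $k$, one gets $e^{i\frac{\pi}{2}\Gamma_{y}^{2}}f=(f_{0}+f_{2})+i(f_{1}+f_{3})$, which by (\ref{ne1}) equals $\tfrac12\big((1+i)f(y)+(1-i)f_{\beta}(y)\big)$. Substituting the closed forms of $f$ and $f_{\beta}$ and observing that $\sqrt{+}\,f(y)=U^{1}_{m}$ and $\sqrt{+}\,f_{\beta}(y)=U^{2}_{m}$ produces precisely the first two terms of the asserted formula, with prefactor $\tfrac{\Gamma(m/2)}{2}\cdot\tfrac12=\tfrac{\Gamma(m/2)}{4}$, i.e. $\tfrac{\Gamma(m/2)}{4\sqrt{+}}$ after pulling $\sqrt{+}$ out of the $U^{i}_{m}$.

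The substantial step is the $g$-term, since $g$ is assembled from the blocks $yM_{k}(y)x\in y\mathcal{M}_{k}$, on which $\Gamma_{y}$ acts with eigenvalue $k+m-1$ instead of $-k$, cf. (\ref{eg1}). Expanding $(k+m-1)^{2}=k^{2}+2k(m-1)+(m-1)^{2}$ and using the homogeneity $M_{k}(\lambda y)=\lambda^{k}M_{k}(y)$ gives
\[
e^{i\frac{\pi}{2}\Gamma_{y}^{2}}\big(yM_{k}(y)x\big)=e^{i\frac{\pi}{2}(m-1)^{2}}e^{i\frac{\pi}{2}k^{2}}(-1)^{k(m-1)}\,yM_{k}(y)x=e^{i\frac{\pi}{2}(m-1)^{2}}e^{i\frac{\pi}{2}k^{2}}\,yM_{k}\big((-1)^{m-1}y\big)x .
\]
As $e^{i\frac{\pi}{2}k^{2}}$ again depends only on the parity of $k$, splitting the defining series of $g$ into its even and odd parts in $k$ yields $e^{i\frac{\pi}{2}\Gamma_{y}^{2}}g=e^{i\frac{\pi}{2}(m-1)^{2}}\tfrac12\big((1+i)h+(1-i)\bar h\big)$, where $h$ is the series with $M_{k}$ evaluated at $(-1)^{m-1}y$ and $\bar h$ the one with $M_{k}$ evaluated at $(-1)^{m}y$. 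Comparing with the closed forms for $g$ and $g_{\beta}$, one has $(h,\bar h)=(g,g_{\beta})$ when $m$ is odd and $(h,\bar h)=(g_{\beta},g)$ when $m$ is even; in either parity, inserting the explicit $g$, $g_{\beta}$ and carrying the factor $(-1)^{m-1}$ through the numerators and denominators shows that $\tfrac12\big((1+i)h+(1-i)\bar h\big)=\tfrac{1}{2\sqrt{+}}\big((1+i)U^{3}_{m}+(1-i)U^{4}_{m}\big)$.

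Adding the $f$- and $g$-contributions and factoring out $\tfrac{\Gamma(m/2)}{4\sqrt{+}}$ then gives the stated formula. I expect the only delicate point to be exactly this last bookkeeping — tracking the sign $(-1)^{m-1}$ through the numerators and through the half-integer powers $(s\pm(-1)^{m-1}i(x,y))^{m/2}$ in the denominators — so that a single closed expression is valid simultaneously for even and odd $m$.
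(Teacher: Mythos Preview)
Your proposal is correct and follows essentially the same route as the paper: act with $e^{i\frac{\pi}{2}\Gamma_{y}^{2}}$ on the Laplace-domain identity (\ref{le1}), treat the $f$-part via the monogenic expansion and the parity of $k$ to obtain $\tfrac12\big((1+i)f+(1-i)f_{\beta}\big)$, and treat the $g$-part by expanding $(k+m-1)^{2}$ and absorbing the cross term $e^{i\pi(m-1)k}$ via $M_{k}\big((-1)^{m-1}y\big)$, exactly as the paper does. Your explicit parity bookkeeping that identifies $(h,\bar h)$ with $(g,g_{\beta})$ or $(g_{\beta},g)$ according to the parity of $m$, and then matches these to $U^{3}_{m},U^{4}_{m}$, is just a more spelled-out version of the paper's ``treated similarly \ldots\ collecting everything''; no new idea is involved.
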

When $m=2$, \begin{eqnarray*}&&\mathcal{L}(e^{i\frac{\pi}{2}\Gamma_{y}^{2}}e^{-it(x,y)})\\
&=&\frac{1}{2\sqrt{+}}\biggl(\frac{\sqrt{+}}{s-i(x,y)}+ \frac{s-iyx}{s+i(x,y)} \biggr).
\end{eqnarray*}
By formula (\ref{l7}), (\ref{l2}), and the convolution formula (\ref{cv1}),  the kernel equals, putting $t=1$, \begin{eqnarray*}K_{2, \Gamma^{2}}(x,y)=e^{i(x,y)}+J_{0}(|x||y|)+ix\wedge y\int_{0}^{1}e^{-i(x,y)(1-\tau)}J_{0}(|x||y|\tau)d\tau.\end{eqnarray*}

In the following, we analyze each term in Theorem \ref{the2} in detail. By formula (\ref{l8}), (\ref{cv1}) and (\ref{l2}), letting $t=1$, we get  $U^{1}_{m},  U^{2}_{m},  U^{3}_{m},  U^{4}_{m}$  in the time domain as
\begin{eqnarray*}
K_{U^{1}_{m}}&=&\frac{e^{-i(x,y)}}{\Gamma(m/2)}+\frac{1}{\Gamma(m/2-1)}\int_{0}^{1}\tau^{m/2-2}e^{-i(x,y)\tau}J_{0}(|x||y|(1-\tau))d\tau
\\&&+\frac{ix\wedge y}{\Gamma(m/2)}\int_{0}^{1}e^{-i(x,y)}J_{0}(|x||y|(1-\tau))d\tau,
\\
K_{U^{2}_{m}}&=&\frac{e^{i(x,y)}}{\Gamma(m/2)}+\frac{1}{\Gamma(m/2-1)}\int_{0}^{1}\tau^{m/2-2}e^{i(x,y)\tau}J_{0}(|x||y|(1-\tau))d\tau\\&&-\frac{ix\wedge y}{\Gamma(m/2)}\int_{0}^{1}e^{i(x,y)}J_{0}(|x||y|(1-\tau))d\tau,
\\
K_{U^{3}_{m}}&=&(-1)^{m-1}(\frac{1}{\Gamma(m/2)}e^{i(-1)^{m}(x,y)}\\&&-\frac{1}{\Gamma(m/2-1)}\int_{0}^{1}\tau^{m/2-2}e^{i(-1)^{m}(x,y)\tau}J_{0}(|x||y|(1-\tau))d\tau)\\&&-\frac{ix\wedge y}{\Gamma(m/2)}\int_{0}^{1}e^{i(-1)^{m}(x,y)}J_{0}(|x||y|(1-\tau))d\tau,
\end{eqnarray*}\begin{eqnarray*}
K_{U^{4}_{m}}&=&(-1)^{m-1}(-\frac{1}{\Gamma(m/2)}e^{i(-1)^{m-1}(x,y)}\\&&+\frac{1}{\Gamma(m/2-1)}\int_{0}^{1}\tau^{m/2-2}e^{i(-1)^{m-1}(x,y)\tau}J_{0}(|x||y|(1-\tau))d\tau)\\&&-\frac{ix\wedge y}{\Gamma(m/2)}\int_{0}^{1}e^{i(-1)^{m-1}(x,y)}J_{0}(|x||y|(1-\tau))d\tau
.\end{eqnarray*}

\begin{theorem}\label{th4} Let $m\ge 2$. For $x,y \in \mathbb{R}^{m}$, the generalized Fourier kernel
is given by
\begin{eqnarray*} K_{m,\Gamma^2}(x,y)&=&\frac{\Gamma(m/2)}{4}\biggl((1+i)K_{U^{1}_{m}}+(1-i)K_{U^{2}_{m}}\\&&+e^{i\frac{\pi}{2}(m-1)^{2}}((1+i)K_{U^{3}_{m}}+(1-i)K_{U^{4}_{m}}) \biggr).
\end{eqnarray*}
There exists a constant $c$ such that
\[
|K_{m,\Gamma^2}(x,y)|\le c(1+|x||y|).
\]
\end{theorem}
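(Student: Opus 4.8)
\noindent\emph{Proof strategy.}
The closed expression for $K_{m,\Gamma^{2}}(x,y)$ will need no new computation: Theorem~\ref{the2} identifies $\mathcal{L}(t^{m/2-1}e^{i\frac{\pi}{2}\Gamma_{y}^{2}}e^{-it(x,y)})$ with $\frac{\Gamma(m/2)}{4\sqrt{+}}$ times a fixed linear combination of $U^{1}_{m},\ldots,U^{4}_{m}$, and the inverse Laplace transform is linear and, by Lerch's theorem, uniquely determined on functions continuous on $[0,\infty)$. Applying $\mathcal{L}^{-1}$ term by term, inserting the explicit inversions $K_{U^{j}_{m}}$ already obtained from (\ref{l8}), (\ref{cv1}) and (\ref{l2}), and setting $t=1$, one reads off exactly the stated formula for $K_{m,\Gamma^{2}}$. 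The real content of the theorem is therefore the polynomial estimate, which I would prove by bounding each $K_{U^{j}_{m}}$ individually.

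First I would dispose of the scalar prefactors: $\Gamma(m/2)/4$ is a fixed constant, $|1\pm i|=\sqrt 2$, and $e^{i\frac{\pi}{2}(m-1)^{2}}$ is an integer power of $i$ and hence of modulus $1$. Thus $|K_{m,\Gamma^{2}}(x,y)|\le c_{m}\big(|K_{U^{1}_{m}}(x,y)|+|K_{U^{2}_{m}}(x,y)|+|K_{U^{3}_{m}}(x,y)|+|K_{U^{4}_{m}}(x,y)|\big)$, so it is enough to bound each $|K_{U^{j}_{m}}(x,y)|$ by $c(1+|x||y|)$.

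Each $K_{U^{j}_{m}}$ has the same three-term shape, and I would treat the three types of term in turn. The \emph{free-wave term} is a constant multiple of $e^{\pm i(x,y)}$ (possibly carrying an extra sign $(-1)^{m-1}$); since $(x,y)\in\mathbb{R}$ its modulus is a constant. The \emph{convolution term} is a constant multiple of $\int_{0}^{1}\tau^{m/2-2}e^{\pm i(x,y)\tau}J_{0}(|x||y|(1-\tau))\,d\tau$, which I would bound using the uniform estimate $|J_{0}(t)|\le1$ for $t\ge0$ --- no asymptotic decay of the Bessel function is needed, since the integration runs over the compact interval $[0,1]$ --- together with integrability of $\tau^{m/2-2}$ on $[0,1]$, which holds for $m\ge3$, while for $m=2$ the prefactor $1/\Gamma(m/2-1)=1/\Gamma(0)$ vanishes (in agreement with the explicit $K_{2,\Gamma^{2}}$ computed above); this term is then bounded by the constant $\frac{1}{\Gamma(m/2-1)}\int_{0}^{1}\tau^{m/2-2}\,d\tau$. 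The \emph{bivector term} is $\pm\frac{i\,x\wedge y}{\Gamma(m/2)}\int_{0}^{1}e^{\pm i(x,y)(1-\tau)}J_{0}(|x||y|(1-\tau))\,d\tau$; its scalar integral is again bounded by $1$, and from $(x\wedge y)^{2}=(x,y)^{2}-|x|^{2}|y|^{2}$ together with $\overline{x\wedge y}=-x\wedge y$ one obtains $|x\wedge y|^{2}=|x|^{2}|y|^{2}-(x,y)^{2}\le|x|^{2}|y|^{2}$, i.e. $|x\wedge y|\le|x||y|$, so this term is bounded by $\frac{1}{\Gamma(m/2)}|x||y|$. Summing the three contributions gives $|K_{U^{j}_{m}}(x,y)|\le c(1+|x||y|)$, and hence the asserted bound on $K_{m,\Gamma^{2}}$.

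The argument is essentially bookkeeping and I do not expect a genuine obstacle. The two points calling for a little care are the integrability of $\tau^{m/2-2}$ at the origin, which forces the low dimensions $m=2$ (settled separately above) and $m=3$ (where $\tau^{-1/2}$ is still integrable) to be checked by hand, and the observation that the \emph{only} source of growth in $|x||y|$ is the bivector prefactor $x\wedge y$, every Bessel factor and every exponential factor being bounded by $1$; consequently no super-linear term can arise and the bound emerges in exactly the stated shape $c(1+|x||y|)$. As an alternative, one could instead invoke Remark~\ref{rma1}, which realizes $f,f_{\alpha,\beta,\gamma}$ and $g,g_{\alpha,\beta,\gamma}$ as special values $F_{p},G_{p}$ of the fractional Clifford-Fourier kernel and so imports the corresponding estimates from \cite{CDL}, but the direct computation above is self-contained.
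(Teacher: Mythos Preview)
Your proposal is correct and follows essentially the same approach as the paper. The paper's own proof is the one-line observation that $J_{0}$ and $e^{\pm i(x,y)}$ are bounded and $|x\wedge y|\le|x||y|$; you have simply unpacked this in full detail, including the $m=2$ edge case and the integrability of $\tau^{m/2-2}$ on $[0,1]$, which the paper leaves implicit.
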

\begin{proof} This follows from the fact that $J_{0}(y)$ and $e^{i(x,y)}$ are bounded functions and $|x\wedge y|\le |x||y|$.
\end{proof}
\subsection{Closed expression for $e^{i\frac{\pi}{2}G(\Gamma_{y})}e^{-i(x,y)}$}
In this subsection, we consider the more general case.  We act with $G(\Gamma_{y})$ on the Fourier kernel. Here $G(x)$ is a polynomial with integer coefficients,
\[G(x)=a_{n}x^{n}+a_{n-1}x^{n-1}+\cdots+a_{1}x+a_{0},\qquad a_{k}\in \mathbb{Z}.\]
Using the fact that  $e^{i\frac{\pi}{2}j}$ is 4-periodic in $j$,
\[
e^{i\frac{\pi}{2}G(\Gamma_{y})}M_{k}(y)=e^{i\frac{\pi}{2}G(-k)}M_{k}(y)
\]
 and
  \[
  G(4n+k)\equiv G(k)(\mbox{mod} 4),\]
 we have
\begin{eqnarray*}e^{i\frac{\pi}{2}G(\Gamma_{y})}f(y)&=&e^{i\frac{\pi}{2}G(0)}f_{0}+e^{i\frac{\pi}{2}G(-1)}f_{1}+e^{i\frac{\pi}{2}G(-2)}f_{2}+e^{i\frac{\pi}{2}G(-3)}f_{3}
\\&=&i^{G(0)}f_{0}+i^{G(-1)}f_{1}+i^{G(-2)}f_{2}+i^{G(-3)}f_{3},\end{eqnarray*}
with each $f_{k}$ defined in (\ref{fe1}).
 By
 \[
 e^{i\frac{\pi}{2}G(\Gamma_{y})}(yM_{k}(y))=e^{i\frac{\pi}{2}G(m-1+k)}(yM_{k})
 \]
and \[G(4n+k+m-1)\equiv G(k+m-1)(\mbox{mod} 4),\]
we have
 \begin{eqnarray*}&&e^{i\frac{\pi}{2}G(\Gamma_{y})}g(y)\\&=&\frac{iy}{s+\sqrt{+}}\biggl(e^{i\frac{\pi}{2}G(m-1)}f_{0}+e^{i\frac{\pi}{2}G(m)}f_{1}+e^{i\frac{\pi}{2}G(m+1)}f_{2}+e^{i\frac{\pi}{2}G(m+2)}f_{3}\biggr)x\\
 &=&\frac{iy}{s+\sqrt{+}}\biggl(i^{G(m-1)}f_{0}+i^{G(m)}f_{1}+i^{G(m+1)}f_{2}+i^{G(m+2)}f_{3}\biggr)x.\end{eqnarray*}
 Collecting everything we obtain and applying (\ref{ne1}), we get
\begin{theorem}\label{th5} For $G(x)\in \mathbb{Z}[x]$, the Laplace transform of $t^{m/2-1}e^{i\frac{\pi}{2}G(\Gamma_{y})}e^{-it(x,y)}$ is given by  \begin{eqnarray*} &&\mathcal{L}(t^{m/2-1}e^{i\frac{\pi}{2}G(\Gamma_{y})}e^{-it(x,y)})\\&=&\frac{\Gamma(m/2)}{8}\biggl(A_{m}^{1}BC^{T}_{m} +\frac{iy}{s+\sqrt{+}}A_{m}^{2}BC^{T}_{m}x \biggr)\end{eqnarray*}
with $A_{m}^{1}, A_{m}^{2}, B,  C_{m}$ the matrices given by \begin{eqnarray*}
&&A_{m}^{1}=\begin{pmatrix}i^{G(0)}& i^{G(-1)}& i^{G(-2)}& i^{G(-3)}\end{pmatrix},\\
&&A_{m}^{2}=\begin{pmatrix}i^{G(m-1)}& i^{G(m)}& i^{G(m+1)}& i^{G(m+2)}\end{pmatrix},\\
&&B=\begin{pmatrix}1&1&1&1\\1&-i&-1&i\\1&-1&1&-1\\1&i&-1&-i\end{pmatrix},\\&&C_{m}=\begin{pmatrix}f(y)&f_{\alpha}(y)&f_{\beta}(y)&f_{\gamma}(y)\end{pmatrix}.\end{eqnarray*}
\end{theorem}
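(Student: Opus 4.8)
\section*{Proof proposal for Theorem \ref{th5}}

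The plan is to start from Lemma 1 and let the operator $e^{i\frac{\pi}{2}G(\Gamma_y)}$ act on it, just as $e^{i\frac{\pi}{2}\Gamma_y^2}$ was propagated in the previous subsection. By the definitions of $f$ and $g$, the right-hand side of (\ref{le1}) equals $\tfrac{\Gamma(m/2)}{2}\bigl(f(y)+g(y)\bigr)$; since $e^{i\frac{\pi}{2}G(\Gamma_y)}$ acts only in the variable $y$ while $\mathcal{L}$ integrates in $t$, it commutes with the Laplace transform, so
\[
\mathcal{L}\bigl(t^{m/2-1}e^{i\frac{\pi}{2}G(\Gamma_y)}e^{-it(x,y)}\bigr)=\frac{\Gamma(m/2)}{2}\Bigl(e^{i\frac{\pi}{2}G(\Gamma_y)}f(y)+e^{i\frac{\pi}{2}G(\Gamma_y)}g(y)\Bigr),
\]
and it remains to evaluate the operator on $f$ and on $g$ separately.

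For $f$, I would use its series $f(y)=\tfrac{2^{m/2}}{\sqrt{+}(s+\sqrt{+})^{m/2-1}}\sum_{k\ge0}\tfrac{M_k(y)}{(s+\sqrt{+})^k}$ from \cite{CDL} and the regrouping $f=f_0+f_1+f_2+f_3$ of (\ref{fe1}) according to $k\bmod4$. As $\Gamma_y$ commutes with the radial scalar prefactors, (\ref{eg}) shows that every summand of $f_k$ is a $\Gamma_y$-eigenfunction of eigenvalue $-(4n+k)$; since $i^{G(j)}$ depends only on $G(j)\bmod4$ and, for $G\in\mathbb{Z}[x]$, $G(4n+k)\equiv G(k)\pmod4$, the function $f_k$ is an eigenfunction of $e^{i\frac{\pi}{2}G(\Gamma_y)}$ of eigenvalue $i^{G(-k)}$, whence $e^{i\frac{\pi}{2}G(\Gamma_y)}f=A_m^1(f_0,f_1,f_2,f_3)^T$. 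For $g=\tfrac{iy}{s+\sqrt{+}}f(y)x=\tfrac{iy}{s+\sqrt{+}}(f_0+f_1+f_2+f_3)x$ the building blocks are of the form $(\text{radial})\cdot yM_{4n+k}(y)x$, which by (\ref{eg1}) are $\Gamma_y$-eigenfunctions of eigenvalue $4n+k+m-1$ (right multiplication by the constant $x$ does not affect $\Gamma_y$); using $G(4n+k+m-1)\equiv G(k+m-1)\pmod4$ gives $e^{i\frac{\pi}{2}G(\Gamma_y)}g=\tfrac{iy}{s+\sqrt{+}}\bigl(A_m^2(f_0,f_1,f_2,f_3)^T\bigr)x$. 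Finally I would substitute the inversion relations (\ref{ne1}), that is $(f_0,f_1,f_2,f_3)^T=\tfrac14 B\,C_m^T$, and multiply through by $\tfrac{\Gamma(m/2)}{2}$, which absorbs the two factors $\tfrac14$ into $\tfrac{\Gamma(m/2)}{8}$; this produces exactly $\tfrac{\Gamma(m/2)}{8}\bigl(A_m^1BC_m^T+\tfrac{iy}{s+\sqrt{+}}A_m^2BC_m^Tx\bigr)$.

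The step that needs justification is the termwise action of $e^{i\frac{\pi}{2}G(\Gamma_y)}$ on the (normally convergent) series for $f$ and $g$: this operator is not a differential operator but is defined through the $\Gamma_y$-eigenspace decomposition, scaling the degree-$k$ monogenic piece of $f$ by $i^{G(-k)}$ and the corresponding piece of $g$ by $i^{G(k+m-1)}$. Since all these scalars are unimodular, the transformed series converge on the same set as the original ones, so the interchange is legitimate and the result is again a function of the same type. Everything past that point -- recognising the row vectors $A_m^1$, $A_m^2$ and the matrix $B$ of (\ref{ne1}) -- is routine linear bookkeeping, and specialising to $G(x)=x^2$ recovers Theorem \ref{the2} as a consistency check.
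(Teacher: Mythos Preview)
Your proposal is correct and follows essentially the same route as the paper: act with $e^{i\frac{\pi}{2}G(\Gamma_y)}$ on the Laplace-domain identity $\mathcal{L}(t^{m/2-1}e^{-it(x,y)})=\tfrac{\Gamma(m/2)}{2}(f+g)$, use the $4$-periodicity $G(4n+k)\equiv G(k)\pmod 4$ together with (\ref{eg})--(\ref{eg1}) to reduce the action on $f$ and $g$ to the eigenvalues $i^{G(-k)}$ and $i^{G(m-1+k)}$ on the pieces $f_k$, and then substitute (\ref{ne1}). Your explicit remarks on why the operator commutes with $\mathcal{L}$ and why the termwise action on the monogenic series is legitimate (unimodular multipliers) are welcome additions that the paper leaves implicit; the slightly awkward phrase ``the two factors $\tfrac14$'' should just read ``the factor $\tfrac14$'' coming from (\ref{ne1}), but the arithmetic $\tfrac{\Gamma(m/2)}{2}\cdot\tfrac14=\tfrac{\Gamma(m/2)}{8}$ is of course right.
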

\begin{remark} We could get the regular Fourier kernel $e^{-i(x,y)}$ by setting $G(x)=0$ or $4x$ for dimension $m\ge 2$. When $G=2x^{2}$, we get the inverse Fourier kernel $e^{i(x,y)}$ for even dimension. When $G(x)=\pm x$, it is the Clifford-Fourier transform \cite{HY}.
\end{remark}
As the constant term of the polynomial will only contribute a constant factor to the integral kernel, in the following we only consider  polynomials without constant term
\[G(x)=a_{n}x^{n}+a_{n-1}x^{n-1}+\cdots+a_{1}x, \qquad a_{k}\in \mathbb{Z}.\]
By  \[
  G(4n+k)\equiv G(k)(\mbox{mod} 4),\]
it reduces to four cases $G(k)(\mbox{mod} 4),$  $k=0, 1, 2, 3.$  The set $\{x^{m}\}\cup \{1\}, m\in \mathbb{N}$ is a basis for  polynomials over the ring of integers. We consider the four cases on this basis
\begin{eqnarray*}
&&x^{j}= 0, \qquad \mbox{when} \quad  x=0;\\
&&x^{j} = 1, \qquad \mbox{when} \quad  x=1;\\
&&x^{j}\equiv \left\{
               \begin{array}{ll}
                 2(\mbox{mod} 4), & \hbox{when $j=1$ and $x=2$}; \\
                 0(\mbox{mod} 4), & \hbox{when $j\ge 2$ and $x=2$};
               \end{array}
             \right. \\
&&x^{j} \equiv \left\{
               \begin{array}{ll}
                 1(\mbox{mod} 4), & \hbox{when $j$ is even and $x=3$}; \\
                 3(\mbox{mod} 4), & \hbox{when $j$ is odd and $x=3$}.
               \end{array}
             \right.
\end{eqnarray*} For each $G(x)$,  we denote $\frac{G(1)+G(-1)}{2}=s_{0}= \sum_{j=0}^{\lfloor n/2\rfloor} a_{2j}$  and $\frac{G(1)-G(-1)}{2}=s_{1}= \sum_{j=0}^{\lfloor n/2\rfloor} a_{2j+1}$ with $n$ the degree of $G(x)$. We have
\begin{eqnarray*} &&G(0)=0,\\
&& G(1)=s_{0}+s_{1},\\
&&G(2)\equiv 2a_{1}(\mbox{mod} 4),\\
&& G(3) \equiv G(-1)\equiv s_{0}-s_{1}(\mbox{mod} 4).
\end{eqnarray*}
Therefore \begin{eqnarray*}&&i^{G(0)}=1,\qquad i^{G(-1)}=i^{G(3)}=i^{s_{0}+3s_{1}},\\&& i^{G(-2)}=i^{G(2)}=(-1)^{a_{1}}, \qquad i^{G(-3)}=i^{G(1)}=i^{s_{0}+s_{1}}.\end{eqnarray*}

The class of integral transforms with polynomially bounded kernel is of great interest. For example, new uncertainty principles have been given for this kind of integral transforms in \cite{GJ}. As we can see in Theorem \ref{th5}, the generalized Fourier kernel is a linear combination of $f_{\alpha,\beta,\gamma}, f, g_{\alpha,\beta,\gamma},  g $. At present, very few of $f_{\alpha}, f_{\gamma}, g_{\alpha}, g_{\gamma}$ are known explicitly.  The integral representations of $f_{\alpha}, f_{\gamma}, g_{\alpha}, g_{\gamma}$ are obtained in \cite{CDL} but without the bound. Only in  even dimensions, special linear combinations of $f_{\alpha}, f_{\gamma}, g_{\alpha}, g_{\gamma}$ are known to be polynomially bounded which is exactly the Clifford-Fourier kernel \cite{HY}.

We have showed  in Theorem \ref{th4}  that $f, f_{\beta}, g, g_{\beta}$ with polynomial bounds behaves better than $f_{\alpha}, f_{\gamma}, g_{\alpha}, g_{\gamma}$. So it is interesting to consider the generalized Fourier transform whose kernel only consists of  $f, f_{\beta}, g, g_{\beta}$. It also provides ways to define hypercomplex Fourier transforms with polynomially bounded kernel in odd dimensions.  We will hence characterize  polynomials such that   $e^{i\frac{\pi}{2}G(\Gamma_{y})}e^{-i(x,y)}$ are only linear combination of $f, f_{\beta}, g, g_{\beta}$.

 For fixed $m$, the kernel is a linear sum of $f, f_{\beta}, g, g_{\beta}$  when  the polynomial $G(x)\in \mathbb{Z}[x]$ satisfies the following conditions
\begin{equation}\label{4f}
\left\{
  \begin{array}{ll}
    i^{G(0)}-ii^{G(-1)}-i^{G(-2)}+ii^{G(-3)}=0, \\
    i^{G(0)}+ii^{G(-1)}-i^{G(-2)}-ii^{G(-3)}=0, \\
    i^{G(m-1)}-ii^{G(m)}-i^{G(m+1)}+ii^{G(m+2)}=0,\\
    i^{G(m-1)}+ii^{G(m)}-ii^{G(m+1)}-ii^{G(m+2)}=0.
  \end{array}
\right.
\end{equation}
We find that (\ref{4f}) is equivalent with
\begin{equation}\label{cl1}
\left\{
  \begin{array}{ll}
   G(0)\equiv G(-2)(\mbox{mod} 4),\\
   G(-1)\equiv G(-3)(\mbox{mod} 4),\\
   G(m-1)\equiv G(m+1)(\mbox{mod} 4),\\
   G(m)\equiv G(m+2)(\mbox{mod} 4).\\
  \end{array}
\right.
\end{equation}
As $G(k)(\mbox{mod} 4)$ is uniquely determined by $G(0), G(-1), G(-2)$ and $G(-3)$, the first two formulas in $(\ref{cl1})$  imply the last two formulas for all $m\ge 2$ automatically. Now $(\ref{cl1})$  becomes
\begin{eqnarray*}\left\{
                                                                                  \begin{array}{ll}
                                                                                    i^{G(0)}=1=i^{G(-2)}=(-1)^{a_{1}},  \\
                                                                                     i^{G(-1)}=i^{s_{0}+3s_{1}}=i^{G(-3)}=i^{s_{0}+s_{1}}.
                                                                                  \end{array}
                                                                                \right.
 \end{eqnarray*} It follows that the kernel  only consists of $f, f_{\beta}, g, g_{\beta}$  if and only if $a_{1}$ and $s_{1}$ are even.
We have the following
\begin{theorem} Let $m\ge 2$. For $x,y \in \mathbb{R}^{m}$ and a polynomial $G(x)$ with integer coefficients, the kernel $e^{i\frac{\pi}{2}G(\Gamma_{y})}e^{-i(x,y)}$ is a linear combination of $f, f_{\beta},g, g_{\beta}$ in the Laplace domain if and only if $a_{1}$ and $\frac{G(1)-G(-1)}{2}$ are even. Furthermore, the generalized Fourier kernel is bounded and equals
\[
\frac{1+i^{G(1)}}{2}e^{-i(x,y)}+\frac{1-i^{G(1)}}{2}K^{\pi}(x,y),
\]
with $K^{\pi}(x,y)$ the fractional Clifford-Fourier kernel in \cite{CDL}.
When $m\ge 2$ is even, the kernel is
\[
\frac{1+i^{G(1)}}{2}e^{-i(x,y)}+\frac{1-i^{G(1)}}{2}e^{i(x,y)}.
\]
When $m\ge 2$ is odd,  there exists a constant $c$ which is independent of $m$ such
that
\begin{eqnarray}\label{ne2}
|e^{i\frac{\pi}{2}G(\Gamma_{y})}e^{-i(x,y)}|\le c(1+|x||y|).
\end{eqnarray}
\end{theorem}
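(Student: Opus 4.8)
The plan is to combine the algebraic characterization already derived in the text (the kernel consists only of $f, f_\beta, g, g_\beta$ if and only if $a_1$ and $s_1 = \frac{G(1)-G(-1)}{2}$ are even) with the explicit evaluation of the relevant matrix product from Theorem \ref{th5}. First I would record that under the stated hypotheses one has $i^{G(0)} = i^{G(-2)} = 1$ and $i^{G(-1)} = i^{G(-3)} = i^{s_0 - s_1} = i^{s_0 + s_1} = i^{G(1)}$, so the row vectors $A_m^1$ and $A_m^2$ simplify dramatically. Plugging these into $A_m^1 B C_m^T$ and $A_m^2 B C_m^T$, the off-diagonal combinations producing $f_\alpha$ and $f_\gamma$ cancel (this is precisely the content of \eqref{4f}), and what survives is a combination of $f$ and $f_\beta$ alone; an identical computation with the $\frac{iy}{s+\sqrt{+}}(\,\cdot\,)x$ term yields $g$ and $g_\beta$. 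Carrying out the arithmetic, the coefficient of $f$ (resp.\ $g$) turns out to be $\frac{1+i^{G(1)}}{2}$ and that of $f_\beta$ (resp.\ $g_\beta$) turns out to be $\frac{1-i^{G(1)}}{2}$, so in the Laplace domain the kernel is $\frac{1+i^{G(1)}}{2}(f+g) + \frac{1-i^{G(1)}}{2}(f_\beta + g_\beta)$.

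Next I would invert the Laplace transform term by term, using the identifications in Remark \ref{rma1}: $f = F_0$, $g = G_0$ assemble (after multiplying by $\frac{\Gamma(m/2)}{2}$ and putting $t=1$) into the regular Fourier kernel $e^{-i(x,y)}$, while $f_\beta = F_\pi$, $g_\beta = G_\pi$ assemble into the fractional Clifford-Fourier kernel $K^\pi(x,y)$ from \cite{CDL}. This gives the stated formula $\frac{1+i^{G(1)}}{2}e^{-i(x,y)} + \frac{1-i^{G(1)}}{2}K^\pi(x,y)$. For the even-dimensional case I would invoke the known fact (quoted in the text, via the remark that $G = 2x^2$ gives the inverse Fourier kernel) that $K^\pi(x,y) = e^{i(x,y)}$ when $m$ is even; alternatively this can be read off directly by inverting $f_\beta + g_\beta$ and noticing the integral terms involving $\tau^{m/2-2}$ telescope against the exponential terms when $m$ is even. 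Either way one obtains $\frac{1+i^{G(1)}}{2}e^{-i(x,y)} + \frac{1-i^{G(1)}}{2}e^{i(x,y)}$, which is manifestly bounded.

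For the odd-dimensional bound \eqref{ne2}, I would argue exactly as in the proof of Theorem \ref{th4}: inverting $f_\beta + g_\beta$ produces the kernel $K^\pi(x,y)$, which by the explicit formulas for $K_{U^j_m}$ (with the appropriate sign/phase changes coming from $F_\pi, G_\pi$) is a finite sum of terms each of which is a bounded exponential $e^{\pm i(x,y)}$, a bounded Bessel integral $\int_0^1 \tau^{m/2-2} e^{\pm i(x,y)\tau} J_0(|x||y|(1-\tau))\,d\tau$ times a constant, or such an integral multiplied by the bivector $ix\wedge y$; since $|x\wedge y| \le |x||y|$, $|J_0| \le 1$ and $|e^{\pm i(x,y)}| = 1$, every term is bounded by a constant times $1 + |x||y|$. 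The constant can be taken independent of $m$ because the $\Gamma$-factors $\Gamma(m/2)/\Gamma(m/2)$ and $\Gamma(m/2)/\Gamma(m/2-1) = m/2-1$ are controlled once we also bound $\int_0^1 \tau^{m/2-2}d\tau = \frac{2}{m-2}$, giving a product that stays bounded in $m$; combined with $|\frac{1\pm i^{G(1)}}{2}| \le 1$ this yields the uniform estimate.

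The main obstacle I anticipate is bookkeeping rather than conceptual: correctly propagating the four phase factors through the $4\times 4$ matrix product $A_m^1 B C_m^T$ and verifying that, under ``$a_1$ and $s_1$ even,'' exactly the $f_\alpha, f_\gamma$ (and $g_\alpha, g_\gamma$) coefficients vanish while the $f, f_\beta$ (and $g, g_\beta$) coefficients collapse to $\frac{1 \pm i^{G(1)}}{2}$; a sign error here would break the clean final formula. The secondary delicate point is the claimed $m$-independence of the constant $c$ in \eqref{ne2}, which requires checking that no factor growing with $m$ (in particular the ratio $\Gamma(m/2)/\Gamma(m/2-1)$ against $\int_0^1 \tau^{m/2-2}\,d\tau$) is left unbounded after all cancellations — this is exactly the estimate that makes the odd-dimensional statement genuinely useful.
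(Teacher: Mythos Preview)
Your approach is essentially the same as the paper's: reduce $A_m^1$ and $A_m^2$ under the hypotheses, read off the coefficients of $f,f_\beta,g,g_\beta$ from the matrix product in Theorem~\ref{th5}, and identify the resulting combinations with $e^{-i(x,y)}$ and $K^\pi$ via Remark~\ref{rma1}; the bound then comes from the explicit time-domain integral expressions, exactly as you outline.

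One genuine bookkeeping slip to fix (precisely the one you flagged as a risk): the coefficient of $g_\beta$ is \emph{not} equal to that of $f_\beta$. A direct computation of $A_m^2(1,-1,1,-1)^T$ gives $(-1)^{m-1}(2-2i^{G(1)})$, so the Laplace-domain kernel is
\[
\tfrac{1+i^{G(1)}}{2}\,(f+g)\;+\;\tfrac{1-i^{G(1)}}{2}\,\bigl(f_\beta+(-1)^{m-1}g_\beta\bigr),
\]
not $f_\beta+g_\beta$. This is exactly what the paper records as $(e^{i\pi})^{m-1}A_m^1(1,-1,1,-1)^T=A_m^2(1,-1,1,-1)^T$. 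Correspondingly, the fractional kernel $K^\pi$ in the Laplace domain is $f_\beta+(-1)^{m-1}g_\beta$ (compare the analogous identification $K^{\pm\pi/2}=f_{\alpha,\gamma}+e^{\mp i\pi(m-1)/2}g_{\gamma,\alpha}$ used later in the paper), not $f_\beta+g_\beta$. Your two sign errors cancel, so your final formula is right, but the intermediate line ``$\tfrac{1-i^{G(1)}}{2}(f_\beta+g_\beta)$'' and the assertion ``$f_\beta,g_\beta$ assemble into $K^\pi$'' should both carry the $(-1)^{m-1}$.
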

\begin{proof} We only need to prove the generalized Fourier kernel is
\[
\frac{1+i^{s_{0}+s_{1}}}{2}e^{-i(x,y)}+\frac{1-i^{s_{0}+s_{1}}}{2}K^{\pi}(x,y).
\]
In fact, by verification, we have,
\begin{eqnarray*} (e^{i0})^{m-1}A_{m}^{1}\begin{pmatrix}1\\1\\1\\1\end{pmatrix}=A_{m}^{2}\begin{pmatrix}1\\1\\1\\1\end{pmatrix};\qquad
(e^{i\pi})^{m-1}A_{m}^{1}\begin{pmatrix}1\\-1\\1\\-1\end{pmatrix}=A_{m}^{2}\begin{pmatrix}1\\-1\\1\\-1\end{pmatrix},
\end{eqnarray*}
and
\begin{eqnarray*}  A_{m}^{1}\begin{pmatrix}1\\1\\1\\1\end{pmatrix}=2+2i^{s_{0}+s_{1}};\qquad A_{m}^{1}\begin{pmatrix}1\\-1\\1\\-1\end{pmatrix}=2-2i^{s_{0}+s_{1}}.
\end{eqnarray*}
By Remark \ref{rma1}, $f+(e^{i0})^{m-1}g$ is the kernel $K_{0}$ and  $f_{\beta}+(e^{i\pi})^{m-1}g_{\beta}$ is the fractional Clifford-Fourier kernel $K^{\pi}$. The bound (\ref{ne2}) follows from the integral expression of  $f, f_{\beta},g, g_{\beta}$ in the time domain.
\end{proof}
\begin{remark} The case $G(x)=x^2$ is a special case of this theorem.
\end{remark}
In the following, we consider the generalized Fourier kernel which has polynomial bound and consists of $f_{\alpha,\beta,\gamma}, f, g_{\alpha,\beta,\gamma},  g $.
For even dimension,  we already know the Clifford-Fourier kernel has a polynomial bound. If the polynomial $G(x)$ satisfies
 \begin{eqnarray}\label{p1} (-i)^{m-1}A_{m}^{1}\begin{pmatrix}1\\-i\\-1\\i\end{pmatrix}=A_{m}^{2}\begin{pmatrix}1\\i\\-1\\-i\end{pmatrix};\qquad
i^{m-1}A_{m}^{1}\begin{pmatrix}1\\i\\-1\\-i\end{pmatrix}=A_{m}^{2}\begin{pmatrix}1\\-i\\-1\\i\end{pmatrix},
\end{eqnarray}
by Remark \ref{rma1}, $e^{i\frac{\pi}{2}G(\Gamma_{y})}e^{-i(x,y)}$ is a linear combination of the Clifford-Fourier kernel and some function bounded by $c(1+|x||y|)$. Hence it has a polynomial bound as well. When $m=4j,$ (\ref{p1}) becomes
\[i(1-i^{s_{0}+3s_{1}+1}-(-1)^{a_{1}}+i^{s_{0}+s_{1}+1})=i^{s_{0}+3s_{1}}+i-i^{s_{0}+s_{1}}-i(-1)^{a_{1}}\]
and
\[-i(1+i^{s_{0}+3s_{1}+1}-(-1)^{a_{1}}-i^{s_{0}+s_{1}+1})=i^{s_{0}+3s_{1}}-i-i^{s_{0}+s_{1}}+i(-1)^{a_{1}}.\]
It shows that  (\ref{p1}) is true for any $G(x)\in \mathbb{Z}[x]$ when $m=4j$. When $m=4j+2$, (\ref{p1}) becomes
\[-i(1-i^{s_{0}+3s_{1}+1}-(-1)^{a_{1}}+i^{s_{0}+s_{1}+1})=i^{s_{0}+s_{1}}+i(-1)^{a_{1}}- i^{s_{0}+3s_{1}}-i\]
and
\[i(1+i^{s_{0}+3s_{1}+1}-(-1)^{a_{1}}-i^{s_{0}+s_{1}+1})=i^{s_{0}+s_{1}}-i(-1)^{a_{1}}-i^{s_{0}+3s_{1}}+i.\]
It also shows that  (\ref{p1}) is true for any $G(x)\in \mathbb{Z}[x]$ when $m=4j+2$.
Now we have
\begin{theorem} Let $m\ge 2$ be even. For $x,y \in \mathbb{R}^{m}$ and any polynomial $G(x)$ with integer coefficients, the kernel $e^{i\frac{\pi}{2}G(\Gamma_{y})}e^{-i(x,y)}$ has a polynomial bound, i.e. there exists a constant $c$  which is independent of $G(x)$ such that
\[
|e^{i\frac{\pi}{2}G(\Gamma_{y})}e^{-i(x,y)}| \le c(1+|x||y|)^{\frac{m-2}{2}}.
\]
\end{theorem}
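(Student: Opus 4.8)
The plan is to reduce the polynomial bound for the general kernel $e^{i\frac{\pi}{2}G(\Gamma_{y})}e^{-i(x,y)}$ to the already-established bound for the Clifford–Fourier kernel in even dimension, using the decomposition supplied by Theorem~\ref{th5}. By Theorem~\ref{th5} the Laplace-domain kernel is a fixed linear combination (with coefficients that are fourth roots of unity, depending only on $G(0),G(-1),G(-2),G(-3)$ and on $m$) of the eight functions $f, f_\alpha, f_\beta, f_\gamma$ and $g, g_\alpha, g_\beta, g_\gamma$. Equivalently, after inverse Laplace transform, the time-domain kernel is a linear combination of the corresponding eight kernels $K_f,\dots,K_{g_\gamma}$ with the same coefficients. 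So it suffices to show that for $m$ even every such combination arising from an integer polynomial $G$ is bounded by $c(1+|x||y|)^{(m-2)/2}$ with $c$ independent of $G$.

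First I would invoke Remark~\ref{rma1}: the Clifford–Fourier kernel $K_{-}$ in dimension $m$ (which by \cite{HY} is known to be polynomially bounded, indeed of order $(m-2)/2$) is, up to the constant $\Gamma(m/2)/2$, the inverse Laplace transform of $f_{\gamma}+g_{\alpha}$ (for $m\equiv 1$) and more generally of the appropriate pair among $f_{\alpha,\gamma}, g_{\alpha,\gamma}$; similarly Theorem~\ref{th4} gives that $f, f_\beta, g, g_\beta$ (hence their inverse transforms $K_f, K_{f_\beta}, K_g, K_{g_\beta}$) are each bounded by $c(1+|x||y|)$. Thus the only potentially badly-behaved pieces are $f_\alpha, f_\gamma, g_\alpha, g_\gamma$, and these enter the kernel only through the two combinations appearing on the left-hand sides of~(\ref{p1}), namely $(-i)^{m-1}A_m^1(1,-i,-1,i)^T$ paired with $A_m^2(1,i,-1,-i)^T$, and $i^{m-1}A_m^1(1,i,-1,-i)^T$ paired with $A_m^2(1,-i,-1,i)^T$. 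The computations displayed just before the theorem statement verify, case by case on $m=4j$ and $m=4j+2$, that the two identities in~(\ref{p1}) hold for \emph{every} $G\in\mathbb Z[x]$: one simply substitutes $i^{G(0)}=1$, $i^{G(-1)}=i^{s_0+3s_1}$, $i^{G(-2)}=(-1)^{a_1}$, $i^{G(-3)}=i^{s_0+s_1}$ (from the list derived earlier) together with the matching values $i^{G(m-1)},\dots,i^{G(m+2)}$, and checks the two polynomial identities in the roots of unity.

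Granting~(\ref{p1}), the contribution of $f_\alpha, f_\gamma, g_\alpha, g_\gamma$ to the kernel assembles, via the matrix $B$ in Theorem~\ref{th5}, precisely into a scalar multiple of $f_\gamma+(\pm1)^{m-1}g_\alpha$ plus a scalar multiple of $f_\alpha+(\pm1)^{m-1}g_\gamma$ — i.e.\ into scalar multiples of the Clifford–Fourier kernel $K_{-}$ and of its ``reflected'' partner $\overline{K_{-}}$ obtained by $p\mapsto -p$ in Remark~\ref{rma1} — with the scalars being bounded (they are sums of four fourth-roots of unity, so $|\cdot|\le 4$) and independent of $G$. Since $K_{-}$ (and likewise its reflection, by the same argument of \cite{HY}) satisfies $|K_{-}(x,y)|\le c(1+|x||y|)^{(m-2)/2}$ in even dimension $m$, and since the remaining pieces $K_f,K_{f_\beta},K_g,K_{g_\beta}$ are bounded by $c(1+|x||y|)\le c(1+|x||y|)^{(m-2)/2}$ for $m\ge 4$ (the case $m=2$ being trivial since then only $f,f_\beta,g,g_\beta$ occur and the kernel is bounded), we conclude $|e^{i\frac{\pi}{2}G(\Gamma_y)}e^{-i(x,y)}|\le c(1+|x||y|)^{(m-2)/2}$ with $c$ depending only on $m$, not on $G$.

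The main obstacle is the bookkeeping in the previous paragraph: one must check that the four ``bad'' functions genuinely only ever appear in the two Clifford–Fourier combinations, i.e.\ that the coefficient vector produced by $A_m^1 B$ (and $A_m^2 B$) always lies, after projecting onto the $f_\alpha,f_\gamma$ (resp.\ $g_\alpha,g_\gamma$) coordinates, in the two-dimensional span dictated by~(\ref{p1}). This is exactly what the displayed case analysis for $m=4j$ and $m=4j+2$ establishes, but it has to be stated cleanly: the two identities of~(\ref{p1}) say that the $(f_\alpha,g_\gamma)$ and $(f_\gamma,g_\alpha)$ parts of the kernel combine with \emph{matched} coefficients into $f_\alpha+(\pm1)^{m-1}g_\gamma$ and $f_\gamma+(\pm1)^{m-1}g_\alpha$ respectively, which are (Remark~\ref{rma1}) the two fractional Clifford–Fourier kernels $K^{\pm\pi/2}$, themselves known to be polynomially bounded of order $(m-2)/2$ in even dimension. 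Once this is recorded, the bound follows by the triangle inequality with $c = \Gamma(m/2)\cdot(\text{absolute constant})$.
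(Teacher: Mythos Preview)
Your proposal is correct and follows essentially the same route as the paper: decompose the kernel via Theorem~\ref{th5} into the eight pieces $f,f_{\alpha},f_{\beta},f_{\gamma},g,g_{\alpha},g_{\beta},g_{\gamma}$, verify (as the paper does immediately before the theorem, case by case on $m=4j$ and $m=4j+2$) that the identities~(\ref{p1}) hold for every $G\in\mathbb{Z}[x]$, and conclude that the potentially unbounded pieces $f_{\alpha},f_{\gamma},g_{\alpha},g_{\gamma}$ always assemble into the Clifford--Fourier kernels $K^{\pm\pi/2}$, which carry the bound $(1+|x||y|)^{(m-2)/2}$ from \cite{HY}.

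One small slip: your parenthetical treatment of $m=2$ is incorrect --- it is \emph{not} true that for $m=2$ only $f,f_{\beta},g,g_{\beta}$ occur (that would require $a_{1}$ and $s_{1}$ even, which you are not assuming). The right fix is that $m=2$ is simply the $j=0$ instance of the $m=4j+2$ verification of~(\ref{p1}), so the bad pieces still combine into $K^{\pm\pi/2}$, and the good pieces combine (by the analogous identities verified in the proof of the preceding theorem) into $K^{0}=e^{-i(x,y)}$ and $K^{\pi}=e^{i(x,y)}$, both bounded; this yields the required $(1+|x||y|)^{0}$ bound. For $m\ge 4$ your cruder estimate $c(1+|x||y|)$ on the good pieces already suffices, as you note.
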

At the end of this section, we give the formal generating function of the even dimensional generalized Fourier kernels for a class of polynomials. We define
\[H(x,y,a, G)= \sum_{m=2,4,6,\cdots}\frac{K_{m,G}(x,y)a^{m/2-1}}{\Gamma(m/2)}.
\]
\begin{theorem} Let $m\ge 2$ be even. For $x,y \in \mathbb{R}^{m}$ and any polynomial $G(x)$ with integer coefficients,  the formal generating function of the even dimensional generalized Fourier kernel is given by
 \begin{eqnarray*}
 &&H(x,y,a,G) \\&=&\frac{1-i^{G(-1)+1}-(-1)^{G'(0)}+i^{G(1)+1}}{2}\biggl(\cos (\sqrt{|x|^2|y|^2-((x,y)+a)^2})-(x\wedge y-a) \frac{\sin\sqrt{|x|^2|y|^2-((x,y)+a)^2}}{\sqrt{|x|^2|y|^2-((x,y)+a)^2}}\biggr)\\&&+\frac{1+i^{G(-1)+1}-(-1)^{G'(0)}-i^{G(1)+1}}{2}\biggl(
 \cos(\sqrt{|x|^{2}|y|^{2}-((x,y)-a)^{2}})+(x\wedge y+a)\frac{\sin\sqrt{|x|^{2}|y|^{2}-((x,y)-a)^{2}}}{\sqrt{|x|^{2}|y|^{2}-((x,y)-a)^{2}}}\biggr)\\&&+\frac{1+i^{G(-1)}+(-1)^{G'(0)}+i^{G(1)}}{2}e^{-(i(x,y)-a)}
 +\frac{1-i^{G(-1)}+(-1)^{G'(0)}-i^{G(1)}}{2}e^{i(x,y)+a}.
 \end{eqnarray*}

\end{theorem}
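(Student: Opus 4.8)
The plan is to obtain the generating function by summing, over even $m$, the explicit time-domain kernel $K_{m,G}(x,y)$ weighted by $a^{m/2-1}/\Gamma(m/2)$, reducing everything to the four ``building-block'' series coming from $f$, $f_\beta$, $g$, $g_\beta$ and their $\alpha,\gamma$ counterparts as in Theorem~\ref{th5}. First I would recall from Theorem~\ref{th5} that in the Laplace domain $\mathcal{L}(t^{m/2-1}e^{i\frac{\pi}{2}G(\Gamma_y)}e^{-it(x,y)})$ is the fixed linear combination $\frac{\Gamma(m/2)}{8}(A_m^1BC_m^T+\frac{iy}{s+\sqrt{+}}A_m^2BC_m^Tx)$, where the coefficient vectors $A_m^1,A_m^2$ depend on $G$ only through $i^{G(0)},i^{G(-1)},i^{G(-2)},i^{G(-3)}$ and the shifted values at $m-1,\dots,m+2$. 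Using the reductions already recorded in the excerpt ($G(0)=0$, $i^{G(-1)}=i^{G(-3)}=i^{s_0-s_1}$ type identities, $i^{G(-2)}=(-1)^{a_1}=(-1)^{G'(0)}$, $i^{G(-3)}=i^{G(1)}$), the scalar coefficients collapse to expressions in $i^{G(-1)}$, $i^{G(1)}$ and $(-1)^{G'(0)}$ — precisely the four prefactors $\frac{1\mp i^{G(-1)+1}-(-1)^{G'(0)}\pm i^{G(1)+1}}{2}$ and $\frac{1\pm i^{G(-1)}+(-1)^{G'(0)}\pm i^{G(1)}}{2}$ appearing in the statement. This is the bookkeeping step: I would organize it as a $4\times4$ change of basis (the matrix $B$) from $(f,f_\alpha,f_\beta,f_\gamma)$ to $(f_0,f_1,f_2,f_3)$, exactly as in \eqref{ne1}, and observe that for even $m$ the $g$-terms recombine with the $f$-terms via $(e^{i0})^{m-1}=(e^{i\pi})^{m-1}=1$, so that only the four kernels $K_0$ (the regular/fractional kernel at $p=0$), $K^\pi$, $e^{-i(x,y)}$ and $e^{i(x,y)}$ survive after summation, with the indicated weights.

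Next I would carry out the summation over $m$ for each of the four surviving pieces. For the two ``exponential'' pieces, $\sum_{m=2,4,\dots}\frac{e^{\mp i(x,y)}a^{m/2-1}}{\Gamma(m/2)}\cdot\Gamma(m/2)$-type coefficients telescope: writing $m/2-1=j$ with $j\ge0$, the sum $\sum_{j\ge0}\frac{a^j}{j!}\,(\pm)^je^{\mp i(x,y)}$ collapses to $e^{i(x,y)+a}$ and $e^{-(i(x,y)-a)}$ respectively — here the factor $\Gamma(m/2)$ in the definition of $H$ cancels the $1/\Gamma(m/2)$ and one is left with a bare exponential generating function. For the two ``Bessel/oscillatory'' pieces I would use the known even-dimensional closed form of the Clifford–Fourier kernel $K^{\pi}$ (and of $K_0$) from \cite{HY,CDL}: in even dimension $m$ these kernels are expressed through $\Gamma(m/2)$ times a finite sum of Bessel functions $J_{(m-2)/2}$ and $J_{m/2}$ of argument $z=\sqrt{|x|^2|y|^2-(x,y)^2}$ (the $x\wedge y$-part carrying the half-integer-shifted Bessel function). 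The generating function identity $\sum_{\nu\ge0}\frac{(z/2)^{-\nu}J_\nu(z)}{\Gamma(\nu+1)}\,t^\nu = \dots$ — more precisely the classical $\sum_{n\ge0}\frac{J_n(z)}{n!}w^n$-type formulas, or equivalently the Laplace-domain identity \eqref{l2} together with $R=s+r$ — produces the elementary closed forms $\cos\sqrt{|x|^2|y|^2-((x,y)\pm a)^2}$ and $\frac{\sin\sqrt{\cdots}}{\sqrt{\cdots}}$ with the shift $(x,y)\mapsto(x,y)\pm a$. Concretely, I would do this summation in the Laplace domain first — where each kernel is a rational/algebraic function of $s$ and $\sqrt{+}=\sqrt{s^2+|x|^2|y|^2}$ — sum the geometric-type series in $a$ there, and only then invert the Laplace transform at $t=1$ using \eqref{l7}, \eqref{l8}, \eqref{l2} and the convolution formula \eqref{cv1}; the shift $(x,y)\to(x,y)\pm a$ is exactly what multiplication by $e^{\pm a\,\partial}$ (i.e. the generating parameter) does to $e^{\mp i t(x,y)}$.

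The main obstacle I anticipate is the interchange of the infinite sum over $m$ with the inverse Laplace transform (equivalently, justifying convergence of the formal series): the kernels grow polynomially in $|x||y|$ with degree $(m-2)/2$, so $H$ is only a \emph{formal} generating function in $a$ unless one restricts $a$, and I would either (i) present the result as a formal power series identity in $a$ — which is what the statement says — verifying it coefficient-by-coefficient in $a^{m/2-1}$ against Theorem~\ref{th5} and the even-dimensional closed forms, or (ii) note that for fixed $x,y$ the series converges for $|a|$ small and extends by analytic continuation. The coefficient-by-coefficient route is cleanest: for each fixed even $m$, expanding the claimed right-hand side in powers of $a$ and extracting the coefficient of $a^{m/2-1}$ must reproduce $K_{m,G}(x,y)/\Gamma(m/2)$, and this reduces to the two elementary facts that (a) the Taylor coefficients of $\cos\sqrt{|x|^2|y|^2-((x,y)\mp a)^2}$ and of $(x\wedge y\mp a)\frac{\sin\sqrt{\cdots}}{\sqrt{\cdots}}$ in $a$ match, term by term, the Bessel-function expansions of $K^\pi$ and $K_0$ (a classical generating-function identity for $J_\nu$), and (b) the scalar prefactors match $A_m^1B$ and $A_m^2B$ applied to the relevant column vectors, which is the finite linear-algebra check already sketched in the excerpt. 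The remaining verification — that $(-1)^{G'(0)}=(-1)^{a_1}=i^{G(-2)}$ and that $i^{G(-1)},i^{G(1)}$ are the only other invariants — is immediate from $G(x)=\sum a_kx^k$ and $G'(0)=a_1$, so no genuinely new difficulty arises there.
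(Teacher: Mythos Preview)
There is a genuine gap in your identification of the four ``surviving'' building blocks. You claim that for even $m$ the $g$-terms recombine with the $f$-terms so that only $K_0$, $K^{\pi}$, $e^{-i(x,y)}$ and $e^{i(x,y)}$ appear. But $K_0$ \emph{is} the classical kernel $e^{-i(x,y)}$, and for even $m$ the fractional kernel $K^{\pi}$ reduces to $e^{i(x,y)}$ (indeed $f_\beta+(-1)^{m-1}g_\beta=f_\beta-g_\beta=\tfrac{2}{(s-i(x,y))^{m/2}}$ in the Laplace domain). So your list collapses to just the two scalar exponentials, and there is no way to produce the oscillatory terms $\cos\sqrt{|x|^2|y|^2-((x,y)\pm a)^2}$ and $(x\wedge y\mp a)\frac{\sin\sqrt{\cdots}}{\sqrt{\cdots}}$ from them. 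Relatedly, the claim $(e^{i\pi})^{m-1}=1$ for even $m$ is simply false: $(-1)^{m-1}=-1$.

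What the paper actually does is observe that, for even $m$, the relations \eqref{p1} hold for \emph{every} $G\in\mathbb Z[x]$, and these relations pair $f_{\alpha}$ with $g_{\gamma}$ (not with $g_{\alpha}$) and $f_{\gamma}$ with $g_{\alpha}$, yielding the Clifford--Fourier kernels $K^{-\pi/2}$ and $K^{\pi/2}$ rather than $K_0$ and $K^{\pi}$. Concretely,
\[
e^{i\frac{\pi}{2}G(\Gamma_y)}e^{-i(x,y)}
=\tfrac12\Bigl(c_1\,(f_\alpha+e^{-i\frac{\pi}{2}(m-1)}g_\gamma)+c_2\,(f_\gamma+e^{i\frac{\pi}{2}(m-1)}g_\alpha)+c_3\,e^{-i(x,y)}+c_4\,e^{i(x,y)}\Bigr),
\]
with the $c_j$ the four prefactors in the statement. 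Since $\tfrac{\Gamma(m/2)}{2}(f_\alpha+e^{-i\frac{\pi}{2}(m-1)}g_\gamma)$ and $\tfrac{\Gamma(m/2)}{2}(f_\gamma+e^{i\frac{\pi}{2}(m-1)}g_\alpha)$ are exactly $K^{-\pi/2}$ and $K^{\pi/2}$ in the Laplace domain (Remark~\ref{rma1}), one can then invoke the already-established generating function for the even-dimensional Clifford--Fourier kernels (Theorem~8 of \cite{CDL}), which is precisely where the $\cos/\sin$ expressions with the shifted argument $(x,y)\pm a$ come from. Your coefficient bookkeeping and the exponential-summation step are fine; the fix is to redo the $4\times4$ linear algebra with the correct pairing $(\alpha\leftrightarrow\gamma)$ dictated by \eqref{p1}, after which the rest of your outline goes through.
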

\begin{proof} When $m$ is even, the generalized Fourier kernel is
\begin{eqnarray*}e^{i\frac{\pi}{2}G(\Gamma_{y})}e^{-i(x,y)}&=&\frac{1}{2}\biggl( (1-i^{s_{0}+3s_{1}+1}-(-1)^{a_{1}}+i^{s_{0}+s_{1}+1})(f_{\alpha}+e^{i\frac{-\pi}{2}(m-1)}g_{\gamma} )\\&&+(1+i^{s_{0}+3s_{1}+1}-(-1)^{a_{1}}-i^{s_{0}+s_{1}+1})(f_{\gamma}+e^{i\frac{\pi}{2}(m-1)}g_{\alpha})\\&&+(1+i^{s_{0}+3s_{1}}+(-1)^{a_{1}}+i^{s_{0}+s_{1}})e^{-i(x,y)}+ (1-i^{s_{0}+3s_{1}}+(-1)^{a_{1}}-i^{s_{0}+s_{1}})e^{i(x,y)}\biggr),\end{eqnarray*}
 with $s_{0}= \sum_{j=0}^{\lfloor n/2\rfloor} a_{2j}$  and $s_{1}= \sum_{j=0}^{\lfloor n/2\rfloor} a_{2j+1}$.

By $s_{0}+3s_{1}\equiv s_{0}-s_{1}\equiv G(-1) (\mbox{mod} 4)$, $s_{0}+s_{1}=G(1)$,  $a_{1}=G'(0)$ as well as because
$\frac{\Gamma(m/2)}{2}(f_{\alpha}+e^{i\frac{-\pi}{2}(m-1)}g_{\gamma}) $ and $\frac{\Gamma(m/2)}{2}(f_{\gamma}+e^{i\frac{\pi}{2}(m-1)}g_{\alpha})$ are the Clifford-Fourier kernel $K^{\frac{-\pi}{2}}$ and $K^{\frac{\pi}{2}}$ in the Laplace domain, the results follows from the generating function of Clifford-Fourier kernel, see \cite{CDL} Theorem 8.
\end{proof}
\begin{remark} When $G(x)=x$, we get the generating function of the Clifford-Fourier kernel.
\end{remark}
 For the case that the coefficients of $G(x)$ are not integers but fractions, we write $G_{1}(x)=cG(x)$ in which $c$ is the least common multiple of each denominator of $G(x)$. So $G_{1}(x)$ is a polynomial with integer coefficients. We only need to compute $e^{i\frac{\pi}{2c}G_{1}(\Gamma_{y})}f(y)$ and $e^{i\frac{\pi}{2c}G_{1}(\Gamma_{y})}g(y)$. The same method will also work but $f$ and $g$ split into $4c$ parts.

 \

 \footnotesize{\bf Acknowledgments} \qquad H. De Bie is supported by the UGent BOF starting grant 01N01513. P. Lian is supported by the scholarship from Chinese Scholarship Council (CSC).


\begin{thebibliography}{99}
\bibitem{ca} Brackx F, Delanghe R, Sommen F. \newblock {\em Clifford analysis}. Pitman Books Limited, 1982.
\bibitem{FNF} Brackx F, De Schepper N, Sommen F. The Clifford-Fourier transform. {\em J. Fourier Anal. Appl.} 2005, 11(6): 669-681.
\bibitem{FNF2} Brackx F, De Schepper N, Sommen F. The two-dimensional Clifford-Fourier transform. {\em J. Math. Imaging Vision}, 2006, 26(1-2): 5-18.
\bibitem{CDL} Constales D, De Bie H, Lian P. A new construction of the Clifford-Fourier kernel. {arXiv preprint arXiv:1509.01960, 2015}
\bibitem{MH} Craddock M J, Hogan J A. The Fractional Clifford-Fourier Kernel. {\em J. Fourier Anal. Appl.} 2013, 19(4): 683-711.

 \bibitem{Ro1} De Bie H, Oste R, Van der Jeugt J. Generalized Fourier transforms arising from the enveloping algebras of $\mathfrak{sl}(2)$ and $\mathfrak{osp}(1|2)$. { \em Int. Math. Res. Not. IMRN,} 2015: rnv293.
\bibitem{HY} De Bie H, Xu Y. On the Clifford-Fourier transform. {\em Int. Math. Res. Not. IMRN,} 2011: rnq288.
\bibitem{lp2} Erd\'{e}lyi, Arthur, ed. {\em Tables of integral transforms.} Vol. 1. New York: McGraw-Hill, 1954.
\bibitem{lie} Frappat L, Sciarrino A, Sorba P. {\em Dictionary on Lie algebras and superalgebras.} San Diego: Academic Press, 2000.
\bibitem{GJ} Ghobber S, Jaming P. Uncertainty principles for integral operators. {\em Studia Math.} 2014, 220, 197-220.
\bibitem{HR} Howe R. The oscillator semigroup. {\em Proc. Symp. Pure Math.} 1988, 48: 61-132.

\bibitem{Rad1} Sa\"{i}d S B, Kobayashi T, {\O}rsted B. Laguerre semigroup and Dunkl operators. {\em Compos. Math.} 2012, 148(04): 1265-1336.

\bibitem{Lp1} Schiff J L. {\em The Laplace transform: theory and applications.} Springer, 1999.

















%




\end{thebibliography}
\end{document}